\DeclareMathAlphabet{\mathpzc}{OT1}{pzc}{m}{it}
\DeclareMathOperator{\Spec}{Spec}
\DeclareMathOperator{\NE}{NE}
\DeclareMathOperator{\trop}{trop}
\DeclareMathOperator{\vir}{vir}
\DeclareMathOperator{\ev}{ev}
\DeclareMathOperator{\pt}{pt}
\DeclareMathOperator{\QH}{QH}
\DeclareMathOperator{\forget}{Forget}
\DeclareMathOperator{\naive}{naive}
\DeclareMathOperator{\Mor}{Mor}
\let\llb=\llbracket
\let\rrb=\rrbracket
\let\bb=\mathbb
\let\rar=\rightarrow
\let\s=\mathcal
\let\wh=\widehat
\let\wt=\widetilde
\newcommand {\qq} {{\bf q}}
\theoremstyle{plain}
 \newtheorem{thm}{Theorem}[section]
 \newtheorem{lem}[thm]{Lemma}
  \newtheorem{conj}[thm]{Conjecture}
\theoremstyle{definition}
 \newtheorem{dfn}[thm]{Definition}
 \newtheorem{eg}[thm]{Example}
\theoremstyle{remark} 
 \newtheorem{rmk}[thm]{Remark}
\newenvironment{myproof}[1][\proofname]{\proof[#1]\mbox{}}{\endproof}
\title{Fano mirror periods from the Frobenius structure conjecture}
\author{Travis Mandel}
\address{School of Mathematics\\
University of Edinburgh\\
Edinburgh EH9 3FD\\
UK}
\email{Travis.Mandel{\char'100}ed.ac.uk}
\thanks{The author was supported by the National Science Foundation RTG Grant DMS-1246989, and later by the Starter Grant ``Categorified Donaldson-Thomas Theory'' no. 759967 of the European Research Council.}
\begin{document}

\begin{abstract}
The Fano classification program proposed by Coates-Corti-Galkin-Golyshev-Kasprzyk is based on the mirror symmetry prediction that the regularized quantum period of a Fano should be equivalent to the classical period of its mirror Landau-Ginzburg potential.  We prove that this mirror equivalence follows from versions of the Frobenius structure conjecture of Gross-Hacking-Keel.  We also find that the regularized quantum period, which is defined in terms of descendant Gromov-Witten numbers, is in fact given by certain naive curve counts.
\end{abstract}

\maketitle

\setcounter{tocdepth}{1}

\setcounter{tocdepth}{1}

\section{Introduction}\label{Intro}

One of the foundational ideas in the Fanosearch program \cite{CCGGK} is the notion that one might be able to classify Fano's by instead classifying their mirror Landau-Ginzburg potentials.  Associated to a Fano $Y$ is a ``regularized quantum period'' $\wh{G}_Y$ defined in terms of certain descendant Gromov-Witten numbers of $Y$, cf. Definition \ref{qDef}.  We prove  that these descendant Gromov-Witten numbers are in fact given by certain naive counts of rational curves in $Y$, cf. \textbf{Theorem \ref{ThmNaive}}.

On the other hand, associated to a Landau-Ginzburg potential $W$ is a ``classical period'' $\pi_W$ defined in terms of the constant terms of powers of $W$, cf. Definition \ref{cper1}.  One says that $Y$ and $W$ are mirror if $\wh{G}_Y=\pi_W$.  The Frobenius structure conjecture \cite[arXiv v1, Conj. 0.8]{GHK1} and the constructions of \cite{CPS} suggest a precise way to construct the mirror potential $W$.  Under an additional toric transversality hypothesis (shown by the author to hold for cluster varieties  \cite{ManFrob}), or using a naive-counting version of the Frobenius structure conjecture (which Keel-Yu will show holds for all affine log Calabi-Yau varieties containing a Zariski open algebraic torus \cite{KY}), we show that this candidate $W$ really is mirror to $Y$, cf. \textbf{Theorem \ref{FanoThm}}.  This may be viewed as an algebro-geometric analog of \cite[Thm. 1.1]{Tonk}.  One hopes that combining our results with those of \cite{GSInt2} will yield this mirror symmetry result for all Fano's.

\subsection{Regularized quantum periods}\label{Qper}

By a \textbf{Fano variety}, we shall mean a smooth compact complex variety $Y$ such that the anti-canonical bundle $-K_Y$ is ample.  More generally, a \textbf{Fano orbifold} is a smooth integral separated Deligne-Mumford stack $Y$ which is proper and finite type over $\Spec \bb{C}$ and for which $-K_Y$ is ample.  Let $A_*(Y)$ denote the integral Chow lattice of $Y$ (cf. \cite{EG,Kr}), and  let $\NE(Y)$ denote the cone in $A_1(Y)$ generated by classes of effective curves.

Now given a Fano orbifold $Y$ and $\beta \in \NE(Y)$, let $\s{M}_{0,1}(Y,\beta)$ be the moduli space of stable maps $\varphi:(C,x)\rar Y$, where $C$ is a curve of genus $0$ with a single marked point $x$ and with $\varphi_*[C]=\beta$.  We refer to $\beta$ as the \textbf{class} of the maps $\varphi$, and we refer to
\begin{align}\label{dbeta}
    d_{\beta}:=\beta.[-K_Y]
\end{align} as the \textbf{degree}.  The moduli space\footnote{The orbifold version of Fano mirror periods appeared in \cite[Conj. B]{ACC}.  Recall that Gromov-Witten theory has been developed in the orbifold setting in \cite{AGV}.  There, one allows stable maps to have some prescribed orbifold structure on the domain, and so \cite{ACC}'s quantum period has extra coefficients coming from the ``twisted sector'' of the cohomology of $Y$.  We will not consider contributions from these classes and prescribed orbifold structures, and so our regularized quantum period is obtained from that of \cite{ACC} by setting their variables $x_i$ equal to $0$.} has a virtual fundamental class $[\s{M}_{0,1}(Y,\beta)]^{\vir}$ of dimension $\dim Y + d_{\beta}-2$.  Let $\ev_x:\s{M}_{0,1}(Y,\beta)\rar Y$ be the evaluation map $[\varphi:(C,x)\rar Y]\mapsto \varphi(x)$.

 Let $\pi:\s{C}\rar \s{M}_{0,1}(Y,\beta)$ denote the universal curve over the moduli space.  Let $\omega_{\pi}$ denote the relative cotangent bundle of $\pi$, and let $\sigma_x$ denote the section of $\pi$ corresponding to $x$.  Define
\begin{align}\label{psic1}
    \s{L}_x:=\sigma_x^* \omega_{\pi} \quad \mbox{and} \quad \psi_{x}:=c_1(\s{L}_x).
\end{align}

\begin{dfn}\label{qDef}
The \textbf{regularized quantum period} of $Y$ is the power series $\wh{G}_Y:=\sum_{\beta \in \NE(Y)} p_\beta z^{\beta}\in \bb{Q}\llb\NE(Y)\rrb$, where $p_0:=1$, $p_{\beta}:=0$ when $d_{\beta}=1$, and for $d_{\beta}\geq 2$,
\begin{align}\label{pd}
p_{\beta}:=(d_{\beta})!\int_{[\s{M}_{0,1}(Y,\beta)]^{\vir}} \psi_x^{d_{\beta}-2} \ev_x^*([\pt]).
\end{align}
\end{dfn}

\begin{thm}\label{ThmNaive}
Let $D\in |-K_Y|$ be a reduced normal crossings divisor which contains no orbifold points of $Y$.  If $D$ is smooth, or if either Conjecture \ref{FrobConjNaive} or \ref{ttconj} holds for $(Y,D)$, then the coefficient $p_{\beta}$ as in \eqref{pd} is equal to the number of class $\beta$ maps $[\varphi:\bb{P}^1\rar Y]$ with $\varphi(0)$ equal to a generically specified point $y$, and with $\varphi^{-1}(D)$ a generically specified collection of $d_{\beta}$ points in $\bb{P}^1$.  In particular, $\wh{G}_Y$ has positive integer coefficients.
\end{thm}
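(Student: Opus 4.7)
The plan is to translate the descendant integral defining $p_\beta$ into a logarithmic Gromov-Witten invariant of the pair $(Y,D)$, and then to identify this log invariant with the naive curve count, using either a transversality argument (when $D$ is smooth) or one of Conjectures \ref{FrobConjNaive} and \ref{ttconj}.

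For the first step, I work on the moduli stack $\s{M}^{\log}_{0,1+d_\beta}((Y,D),\beta)$ of genus-zero log stable maps of class $\beta$ carrying one interior marked point $x$ (with trivial contact order along $D$) and $d_\beta$ further marked points $x_1,\ldots,x_{d_\beta}$ each with contact order $1$ along $D$. Since $\beta\cdot[D]=d_\beta$, these tangency conditions exhaust the intersection of a generic image curve with $D$, and this log moduli has the same virtual dimension $\dim Y+d_\beta-2$ as $\s{M}_{0,1}(Y,\beta)$. Using the well-known identity $\psi_x^{d_\beta-2}=[\pt]$ in the top cohomology of $\?{M}_{0,1+d_\beta}$, together with the divisor equation applied to the $d_\beta$ contact-order-$1$ tangency conditions and a careful comparison of virtual classes between the ordinary and log moduli, I aim to establish
\[
(d_\beta)!\int_{[\s{M}_{0,1}(Y,\beta)]^{\vir}}\psi_x^{d_\beta-2}\,\ev_x^*[\pt]=\int_{[\s{M}^{\log}_{0,1+d_\beta}((Y,D),\beta)]^{\vir}}\ev_x^*[\pt]\cdot\ft^*[\pt_{\?{M}_{0,1+d_\beta}}],
\]
where $\ft$ is the forgetful-stabilization map to $\?{M}_{0,1+d_\beta}$. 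The factor $(d_\beta)!$ arises as the number of orderings of the $d_\beta$ tangent marked points, while the pulled-back point class on the right pins down a generic configuration of these points in the domain $\bb{P}^1$.

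For the second step, the right-hand side above virtually counts log maps of class $\beta$ with $\varphi(x)=y$, with the tangent marked points $x_1,\ldots,x_{d_\beta}$ at a generic fixed configuration in the domain, and with each $x_i$ mapping into $D$ with contact order $1$. When $D$ is smooth, a general-position argument shows that every log map contributing is an honest map $\bb{P}^1\to Y$ meeting $D$ transversally in $d_\beta$ distinct smooth points of $D$, so the log count reduces to the naive count. In the normal-crossings case, Conjecture \ref{FrobConjNaive} directly asserts the equality of this log count with a naive count, while Conjecture \ref{ttconj} furnishes the same conclusion via the toric-transversality hypothesis. Either way $p_\beta$ equals the asserted naive count, and in particular is a non-negative integer.

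The principal technical obstacle lies in Step 1. Comparing virtual classes and $\psi$-class pullbacks between $\s{M}_{0,1}(Y,\beta)$ and the log moduli requires care: one must account for boundary corrections in the $\psi$-class pullback under the forgetful map to $\?{M}_{0,1+d_\beta}$, and verify that the combinatorial factor $(d_\beta)!$ arises precisely from the ordering of the tangent marked points rather than from spurious boundary contributions. Once Step 1 is in hand, Step 2 is essentially immediate, reducing either to a general-position argument in the smooth-$D$ case or to direct invocation of one of the posited conjectures.
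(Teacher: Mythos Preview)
Your outline is close to the paper's approach, but there are two genuine gaps and one mislocated difficulty.

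First, you mischaracterize Conjecture~\ref{FrobConjNaive}: it does \emph{not} assert that a log count equals a naive count. It posits finiteness of the naive numbers $N_\beta^{\naive}(q_1,\ldots,q_s)$ and the existence of an associative product compatible with them. The paper uses this finiteness indirectly: if a curve contributing to $p_\beta$ were not torically transverse, one could pass to a toric blowup where it becomes transverse, but the number of boundary intersection points then drops below $d_\beta$; finiteness of the naive counts (i.e., a dimension bound) then contradicts the curve also satisfying a generic $\psi_x^{d_\beta-2}$ condition (this is Lemma~\ref{tt}). Similarly, Conjecture~\ref{ttconj} is used to deduce toric transversality of the curves contributing to $p_\beta$ itself, not merely of those contributing to a log invariant. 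Your Step~2 as written does not go through.

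Second, you omit the irreducibility argument and the Lehmann--Tanimoto input (Lemma~\ref{LTlem}). The paper proves (Lemma~\ref{irr}) that every curve in $\s{M}_{0,1}(Y,\beta)$ meeting the generic point and $\psi$-class conditions is irreducible, by a careful case analysis using Lemma~\ref{LTlem}. This, combined with Lemma~\ref{LTlem} again, yields unobstructedness, which is what lets you replace $[\s{M}_{0,1}(Y,\beta)]^{\vir}$ by the honest fundamental class and make the passage to naive counts rigorous. Without this you cannot conclude that the virtual log integral is an actual enumerative count.

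Finally, you misplace the difficulty. Your Step~1 identity is not hard once Lemmas~\ref{irr} and~\ref{tt} are in hand: on the locus of irreducible torically transverse curves, the forgetful map from the log moduli to $\s{M}_{0,1}(Y,\beta)$ has no boundary corrections for the $\psi$-class, the deformation theories agree, and the factorial arises from the projection formula (the forgetful map being generically $P(1)!\cdots P(s)!$-to-one, summed over partitions $P$ of $d_\beta$ recording how many markings go to each component $D_i$). The ``divisor equation'' is not the right tool here.
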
 

One may apply the map $z^{\beta}\mapsto t^{d_{\beta}}$ to obtain a version of $\wh{G}_Y$ in $\bb{Q}\llb t\rrb$ as in \cite{CCGGK}, but it will present no extra difficulties for us to work with this slightly richer version of periods.  This will also prove advantageous when viewing classical periods as Laurent polynomials, cf. Remark \ref{formalCoeff}.

\begin{rmk}
We note that $\wh{G}_Y(t)$ could alternatively be defined by setting $$p_{\beta}=(d_{\beta})!\int_{[\s{M}_{0,3}(Y,\beta)]^{\vir}} \psi_{x_1}^{d_{\beta}} \ev_{x_1}^*([\pt])$$ where $\s{M}_{0,3}(Y,\beta)$ is the moduli space of stable maps with $3$ marked points $x_1,x_2,x_3$.  The equivalence for $d_{\beta}\geq 1$ follows from the Fundamental Class Axiom, while the equivalence for $d_{\beta}=0$ follows from the Point Mapping Axiom (cf. \cite[pg. 39]{Gr} for a nice list of axioms for Gromov-Witten theory). 
\end{rmk}

\subsection{The Frobenius structure conjecture, naive counting version}
As in Theorem \ref{ThmNaive}, let $Y$ be a Fano orbifold and let $D\in |-K_Y|$ be a reduced normal crossings divisor containing no orbifold points of $Y$.  We call such $(Y,D)$ a \textbf{Fano pair}.  If $D$ contains a $0$-stratum, we say $(Y,D)$ has \textbf{maximal boundary}.  We will not assume that $(Y,D)$ has maximal boundary unless otherwise stated.

A simple toric blowup $\eta:(\wt{Y},\wt{D})\rar (Y,D)$ is a blowup $\eta:\wt{Y}\rar Y$ of $Y$ along a stratum of $D$, with $\wt{D}$ the reduced inverse image of $D$.  A \textbf{toric blowup} is then a sequence of simple toric blowups.  Let $H^0_{\log}(Y,D)$ denote the free Abelian group generated by $[Y]$ and $[kD']$ for $k\in \bb{Z}_{>0}$ and $D'$ an irreducible component of $\wt{D}$ for some toric blowup $(\wt{Y},\wt{D})$, up to equivalence.  Here, for $\eta_i:(Y_i,D_i)\rar (Y,D)$, $i=1,2$ two toric blowups of $(Y,D)$, we view an irreducible component $D'_1\subset D_1$ as equivalent to an irreducible component $D'_2\subset D_2$ if they correspond to the same valuation on the function field of $Y$.

We refer to these generators $[Y]$ and $[kD']$ of $H^0_{\log}(Y,D)$ as \textbf{prime fundamental classes}, and we let $B(\bb{Z})$ denote the set of prime fundamental classes of $(Y,D)$.  Equivalently, $B(\bb{Z})$ is the set of integral points of the tropicalization of $(Y,D)$.  Let $$\QH^0_{\log}(Y,D):= \bb{Q}[\NE(Y)]\otimes H^0_{\log}(Y,D).$$  For $q\in B(\bb{Z})$, we write $\vartheta_{q}$ for the corresponding element $1\otimes q\in \QH^0_{\log}(Y,D)$.  If $q$ is represented by $[kD']$ as above, we denote $|q|:=k$, and if $q=[Y]$, we define $|q|:=0$.

For $q_1,\ldots,q_s\in B(\bb{Z})$, let $\eta:(\wt{Y},\wt{D})\rar (Y,D)$ be a toric blowup in which each $q_i$ is either $[Y]$ or is represented by $[k_iD_i]$ for $D_i$ an irreducible component of $\wt{D}$.  Let $(C,x_1,\ldots,x_s,x_{s+1},x_{s+2})$ be a generically specified irreducible genus $0$ curve with $s+2$ marked points.  Let $y$ be a generically specified point of $Y\setminus D$.  Define $N_{\beta}^{\naive}(q_1,\ldots,q_s)\in \bb{Z}_{\geq 0}\cup \{\infty\}$ to be the number of isomorphism classes of maps $\varphi:C\rar Y$ such that $\varphi_*[C]=\beta$, $\varphi(x_{s+1})=y$, and $$\varphi^* \s{O}_{\wt{Y}}(\wt{D})=\s{O}_C\left(\sum_{i=1}^s |q_i|x_i\right).$$  This last condition means that, for $i=1,\ldots,s$, if $q_i=[k_iD_i]$, then $\varphi(C)$ intersects $D_i$ at $x_i$ with order $k_i$, and furthermore, these account for all intersections of $\varphi(C)$ with $\wt{D}$.  

\begin{conj}[Frobenius structure conjecture---weak naive version]\label{FrobConjNaive}
For $q_1,\ldots,q_s$ prime fundamental classes of $(Y,D)$, each $N_{\beta}^{\naive}(q_1,\ldots,q_s)$ is finite, as is the sum
 \begin{align}\label{spoint0}
     \langle \vartheta_{q_1},\ldots,\vartheta_{q_s}\rangle^{\naive}:= \sum_{\beta \in \NE(\wt{Y})} z^{\eta_* \beta} N_{\beta}^{\naive}(q_1,\ldots,q_s)\in \bb{Z}[ \NE(Y)].
 \end{align}
Extend $\langle \cdot \rangle^{\naive}$ to define a  $\bb{Q}[\NE(Y)]$-multilinear $s$-point function $$\langle \cdot \rangle^{\naive}:\QH_{\log}^0(Y,D)^{s} \rar \bb{Q}[\NE(Y)].$$  Then there exists a product $*$ on $\QH_{\log}^0(Y,D)$ making it into a commutative associative $\bb{Q}[\NE(Y)]$-algebra with identity $\vartheta_{[Y]}$ such that 
\begin{align}\label{Naive-def}
    \langle \vartheta_{q_1},\ldots,\vartheta_{q_s}\rangle^{\naive} = \langle \vartheta_{q_1}*\cdots *\vartheta_{q_s}
    \rangle^{\naive}
\end{align}
for all $s$-tuples $\vartheta_{q_1}, \ldots, \vartheta_{q_s}$, $s\geq 1$.  Furthermore, $\langle \vartheta_q\rangle^{\naive}$ equals $1$ if $q=[Y]$ and $0$ otherwise, so the right-hand side of \eqref{Naive-def} can be viewed as taking the $\vartheta_{[Y]}$-coefficient of the product.
\end{conj}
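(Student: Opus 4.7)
The plan is to follow the standard template for WDVV-type associativity in Gromov-Witten theory, adapted to the naive counting setting. First, finiteness: since toric blowups preserve the log Calabi-Yau property of $(Y,D)$, one has $\wt{D}\sim -K_{\wt{Y}}$, so the pullback condition in the definition of $N_\beta^{\naive}$ forces $\beta\cdot(-K_{\wt{Y}})=\sum_i|q_i|$. Combined with Gromov compactness and transversality for a generic configuration $(C,x_1,\ldots,x_{s+2},y)$, this bounds both the range of $\beta$ contributing to \eqref{spoint0} and the number of maps in each class. The extra marked point $x_{s+2}$ plays no substantive role beyond rigidifying the domain when $s$ is small.

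Next, I would construct the product from three-point naive counts by setting
\[
\vartheta_p * \vartheta_q := \sum_{r \in B(\bb{Z})} \langle \vartheta_p, \vartheta_q, \vartheta_{r^\vee} \rangle^{\naive}\, \vartheta_r,
\]
where $r \mapsto r^{\vee}$ is the involution on $B(\bb{Z})$ determined by requiring the pairing $(\vartheta_p,\vartheta_q)\mapsto \langle \vartheta_p,\vartheta_q\rangle^{\naive}$ to correspond to the $\vartheta_{[Y]}$-coefficient of $\vartheta_p*\vartheta_q$. Commutativity follows from the symmetry of $N_\beta^{\naive}$ under permutation of the $q_i$. The identity property of $\vartheta_{[Y]}$ is checked by observing that inserting $[Y]$ imposes no tangency on the corresponding marked point, so forgetting that marked point is an isomorphism of moduli problems and reduces the count to the one without the insertion.

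Associativity is the hard part, and I would prove it by a degeneration argument on the four-point naive invariant $\langle \vartheta_p,\vartheta_q,\vartheta_r,\vartheta_s\rangle^{\naive}$. Fix a one-parameter family of generic configurations of four marked points on $\bb{P}^1$ specializing to a nodal $\bb{P}^1\cup\bb{P}^1$ in the two topologically distinct stable ways, splitting $\{p,q\}\mid\{r,s\}$ versus $\{p,r\}\mid\{q,s\}$. Under such a specialization, a curve counted by $N_\beta^{\naive}$ breaks into two maps glued at a node lying in $Y\setminus\wt{D}$; summing over the tangency class $t\in B(\bb{Z})$ at the node (with $t$ on one side and $t^{\vee}$ on the other, and with the point condition $y$ carried on the component containing $x_{s+1}$), the two degenerations compute the $\vartheta_{[Y]}$-coefficients of $(\vartheta_p*\vartheta_q)*(\vartheta_r*\vartheta_s)$ and $(\vartheta_p*\vartheta_r)*(\vartheta_q*\vartheta_s)$ respectively, and equality of these yields associativity of $*$.

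The main obstacle is the degeneration step itself. Virtually, the splitting axiom in Gromov-Witten theory handles this automatically, but in the naive setting one must verify: (a) every nodal limit of smooth curves has no ghost components and no unexpected multiple covers; (b) gluing gives a scheme-theoretic bijection between the boundary locus and pairs of smooth curves, with multiplicity one; (c) the tangency order at the node is encoded by a single well-defined $t\in B(\bb{Z})$, possibly after passing to a further toric blowup to resolve the node. These are precisely the geometric inputs that the toric transversality hypothesis (Conjecture \ref{ttconj}) is designed to guarantee, and that Gross-Hacking-Keel and Keel-Yu supply via explicit scattering-diagram and log-geometric constructions in their respective settings. Proving them for a general Fano pair, without such an auxiliary hypothesis, is what makes the Frobenius structure conjecture genuinely deep and why I would state the result as a conjecture rather than a theorem here.
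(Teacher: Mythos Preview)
The paper does not prove this statement: it is explicitly stated as a \emph{conjecture} (Conjecture~\ref{FrobConjNaive}), and the only evidence offered is the citation of \cite{ManFrob} and \cite{KY} for the special cases recorded in Theorem~\ref{PastNaiveThm}. There is therefore no ``paper's own proof'' to compare your proposal against. You yourself recognize this in your final paragraph, where you correctly identify the degeneration/splitting step as the genuine obstruction and conclude that the result should be stated as a conjecture.

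That said, one technical point in your sketch deserves correction. Your proposed formula
\[
\vartheta_p * \vartheta_q = \sum_{r \in B(\bb{Z})} \langle \vartheta_p, \vartheta_q, \vartheta_{r^\vee} \rangle^{\naive}\, \vartheta_r
\]
presupposes an involution $r\mapsto r^{\vee}$ on $B(\bb{Z})$ making the two-point function into a perfect pairing. Unlike ordinary quantum cohomology, where Poincar\'e duality supplies this, no such involution exists on $B(\bb{Z})$ in general, and the two-point naive pairing need not be nondegenerate. In the actual proofs of the known cases (\cite{ManFrob}, \cite{KY}, and the forthcoming \cite{GSInt2}), the product $*$ is not extracted from the three-point function by inverting a pairing; rather, the structure constants $c_{p,q}^r$ are constructed directly---via broken lines in a scattering diagram, or via log/punctured Gromov--Witten invariants with an output marking---and one then \emph{verifies} that the resulting algebra reproduces the $s$-point functions. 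This is why the paper phrases the conjecture as asserting the \emph{existence} of such a product rather than giving a formula for it.
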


We now give the cases where Conjecture \ref{FrobConjNaive} is known to hold.  By a \textbf{cluster Fano pair}, we mean a cluster log pair $(Y,D)$ as in \cite[\S 2]{ManFrob} for which $Y$ is Fano.  This is roughly a Fano pair $(Y,D)$ with maximal boundary such that $Y\setminus D$ is (up to codimension $2$) a fiber or subfamily of a cluster $\s{X}$-variety as in \cite{FG1} or \cite{GHK3}.  These include cases where $Y$ is only smooth as an orbifold.

\begin{thm}[\cite{KY} and \cite{ManFrob}]\label{PastNaiveThm}
Conjecture \ref{FrobConjNaive} holds for all cluster Fano pairs \cite[Thm. 1.1]{ManFrob}, and also for all Fano pairs with maximal boundary $(Y,D)$ such that $Y\setminus D$ is a smooth variety containing a Zariski open algebraic torus \cite{KY}.
\end{thm}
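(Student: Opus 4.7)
The plan is to dispose of the two clauses by direct appeal to the corresponding external references, so the work consists essentially of matching the setup of Conjecture \ref{FrobConjNaive} to the hypotheses of \cite[Thm.\ 1.1]{ManFrob} and \cite{KY}.

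For the first clause I would simply invoke \cite[Thm.\ 1.1]{ManFrob}, which establishes exactly the conclusion of Conjecture \ref{FrobConjNaive} for every cluster log pair, with the same definition of the $s$-point invariants in terms of naive curve counts. The points worth checking are: (i) a cluster Fano pair is by definition already a cluster log pair, so the additional Fano hypothesis is no obstacle to applicability; (ii) the tropical set $B(\bb{Z})$ of prime fundamental classes used here coincides with the index set of theta functions in \cite{ManFrob}, which is immediate since both are defined via equivalence classes of divisors under toric blowups; and (iii) the one-point normalization $\langle\vartheta_q\rangle^{\naive}=\delta_{q,[Y]}$ holds because a generic interior basepoint $y\in Y\setminus D$ forces $\beta=0$, leaving only the constant map, which carries $q=[Y]$.

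For the second clause I would apply \cite{KY}. Given a Fano pair $(Y,D)$ with maximal boundary, the hypothesis $D\in|-K_Y|$ together with ampleness of $-K_Y$ makes $U=Y\setminus D$ log Calabi-Yau, and ampleness of $D$ forces $U$ to be affine; combined with the assumption that $U$ contains a Zariski open algebraic torus, this places $(Y,D)$ inside the class treated by \cite{KY}. Their result then supplies the commutative associative algebra on $\QH^0_{\log}(Y,D)$ with structure constants given by the naive counts of \eqref{spoint0}, again subject to verifying that the basepoint convention, tangency prescriptions, and equivalence relation defining $B(\bb{Z})$ agree on both sides.

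The main obstacle — really the only content beyond citation — is this bookkeeping: \cite{ManFrob} and \cite{KY} phrase their Frobenius structure results in somewhat different languages (scattering diagrams with tropical theta functions in the former, direct logarithmic Gromov-Witten input in the latter), so the bulk of the write-up would consist of reconciling those conventions with the formulation of Conjecture \ref{FrobConjNaive} above. No new geometric input is required.
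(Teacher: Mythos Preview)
Your proposal is correct and matches the paper's approach: the paper gives no proof at all for this theorem, treating it purely as a statement of results from \cite{ManFrob} and \cite{KY} with the citations embedded in the theorem itself. Your additional bookkeeping (matching $B(\bb{Z})$ to the theta-function index sets, checking the one-point normalization, verifying that a Fano pair with maximal boundary and torus-containing interior lands in the \cite{KY} framework) is exactly the sort of convention-reconciliation one would supply if asked to expand the citation into an actual argument, but the paper does not carry this out.
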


We describe Conjecture \ref{FrobConjNaive} as a ``weak'' version of the Frobenius structure conjecture because we do not include the usual requirement that $*$ is uniquely determined by \eqref{Naive-def} (although this uniqueness does hold in the cases from \cite{KY} and \cite{ManFrob} as in Theorem \ref{PastNaiveThm}).  We note that the condition that $D$ is ample can also be significantly weakened, e.g., replaced with the condition that $D$ supports an ample divisor.  This version is ``naive'' because each $N_{\beta}^{\naive}(q_1,\ldots,q_s)$ is given by naive counts of rational curves, rather than by the descendant log Gromov-Witten counts which we explain next.

\subsection{The Frobenius structure conjecture, Gromov-Witten version}

By a \textbf{tropical degree}, we mean a map $\Delta:J\rar B(\bb{Z})$ for some finite index-set $J$. In particular, for $\qq$ an $s$-tuple of points $q_1,\ldots,q_s\subset B(\bb{Z})$, we consider \begin{align}\label{Deltapp}
     \Delta_{\qq}:\{1,\ldots,s,s+1,s+2\}\rar B(\bb{Z})
 \end{align} defined by $\Delta_{\qq}(i)=q_i$ for $i=1,\ldots,s$ and $\Delta_{\qq}(s+1)=\Delta_{\qq}(s+2)=[Y]$.  Given a tropical degree $\Delta$, let $\wt{Y}^{\dagger}=(\wt{Y},\wt{D})$ be a toric blowup with $\wt{Y}$ projective and such that each $\Delta(i)$ with $\Delta(i)\neq [Y]$ is represented by $[k_iD_i]$ for $k_i:=|\Delta(i)|$ and $D_i$ an irreducible component of $\wt{D}$.   For $\beta\in \NE(\wt{Y})$, let $\s{M}_{0,\Delta}^{\log}(\wt{Y}^{\dagger},\beta)$ denote \cite{GSlog,AC}'s moduli stack\footnote{Since any orbifold points of $\wt{Y}$ are away from the boundary, one can use \cite{AGV} to extend the construction of the relevant moduli stacks to our orbifold setting.  Cf. \cite[\S 5.5]{GPS} for similar considerations from the viewpoint of relative stable maps.} of basic/minimal stable log maps $\varphi^{\dagger}:C^{\dagger}\rar \wt{Y}^{\dagger}$ over $\Spec \bb{C}$ satisfying the following collection of conditions:
\begin{itemize}
\item $C$ has genus $0$,
\item $\varphi_*[C] = \beta$,
\item $C^{\dagger}$ has $\#J$ marked points $\{x_i\}_{i\in J}$,
\item For each $i\in J$ with $\Delta(i)\neq [Y]$,  $\varphi(x_i)\in D_{i}$.  Furthermore, if $t_1$ is the generator for the ghost sheaf of $\wt{Y}^{\dagger}$ at a generic point of $D_{i}$, and $t_2$ is the generator for the ghost sheaf of $C^{\dagger}$ at $x_i$, then $\varphi^{\flat}:t_1\mapsto k_it_2$.
\end{itemize} 
When the component of $C$ containing $x_i$ is not mapped entirely into $\wt{D}$, this last condition means that the intersection multiplicity of $\varphi(C)$ with $D_{i}$ at $x_i$ is equal to $k_i$. 

Let $\ev_{i}:\s{M}_{0,\Delta}^{\log}(\wt{Y}^{\dagger},\beta)\rar \wt{Y}$ be the evaluation map $[\varphi^{\dagger}:C^{\dagger}\rar \wt{Y}]\mapsto \varphi(x_i)$.   Let $\pi:\s{C}\rar \s{M}^{\log}_{0,\Delta}(\wt{Y}^{\dagger},\beta)$ denote the universal curve over the moduli space.  Let $\omega_{\pi}$ denote the relative cotangent bundle of $\pi$, and let $\sigma_i$ denote the section of $\pi$ corresponding to $x_i$.  Analogously to \eqref{psic1}, define
\begin{align}\label{psi-log}
    \s{L}_{i}^{\log}:=\sigma_i^* \omega_{\pi} \quad \mbox{and} \quad \wt{\psi}_{i}:=c_1(\s{L}_{i}^{\log}).
\end{align}
The stack $\s{M}_{0,\Delta_{\qq}}^{\log}(\wt{Y}^{\dagger},\beta)$ has a virtual fundamental class $[\s{M}_{0,\Delta}^{\log}(\wt{Y}^{\dagger},\beta)]^{\vir}$ of dimension $\dim(Y)+s-1$. We can now define the relevant log Gromov-Witten numbers:
\begin{align}\label{Nbeta}
    N_{\beta}(q_1,\ldots,q_s) := \int_{[\s{M}^{\log}_{0,\Delta_{\qq}}(\wt{Y}^{\dagger},\beta)]^{\vir}}   \ev_{s+1}^*[\pt]\cdot \wt{\psi}_{{s+1}}^{s-1}.
\end{align}
We next define a $\bb{Q}[\NE(Y)]$-multilinear $s$-point function $\langle \cdot \rangle$ on $\QH_{\log}^0(Y,D)$ via
 \begin{align}\label{spointfunction}
     \langle \vartheta_{q_1},\ldots,\vartheta_{q_s}\rangle:=\sum_{\beta\in \NE(\wt{Y})} z^{\eta_*(\beta)}N_{\beta}(\vartheta_{q_1},\ldots,\vartheta_{q_s}).
 \end{align}

\begin{conj}[The Frobenius structure conjecture---weak Gromov-Witten version]\label{FrobConj}
For any Fano pair $(Y,D)$, there exists a product $*$ on $\QH_{\log}^0(Y,D)$ making it into a commutative associative $\bb{Q}[\NE(Y)]$-algebra with identity $\vartheta_{[Y]}$ such that 
\begin{align}\label{spoint}
    \langle \vartheta_{q_1},\ldots,\vartheta_{q_s}\rangle = \langle \vartheta_{q_1}*\cdots *\vartheta_{q_s}\rangle
\end{align}
for all $s$-tuples $q_1, \ldots, q_s\in B(\bb{Z})$, $s\geq 1$.
\end{conj}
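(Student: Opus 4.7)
The natural strategy is to reduce Conjecture \ref{FrobConj} to its naive counterpart Conjecture \ref{FrobConjNaive}, which by Theorem \ref{PastNaiveThm} is already known for cluster Fano pairs and for Fano pairs with maximal boundary whose open part is a smooth variety containing a Zariski open algebraic torus. The technical heart of such a reduction is the identity
\[
N_\beta(q_1,\ldots,q_s) = N_\beta^{\naive}(q_1,\ldots,q_s)
\]
for every class $\beta\in\NE(\wt{Y})$ and every tuple of prime fundamental classes $q_1,\ldots,q_s\in B(\bb{Z})$. Once this is in hand, the $s$-point functions \eqref{spointfunction} and \eqref{spoint0} coincide, and the product $*$ on $\QH^0_{\log}(Y,D)$ produced by Conjecture \ref{FrobConjNaive} automatically verifies the log Gromov-Witten axiom \eqref{spoint}.

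To prove this identification I would adapt the single-point argument of Theorem \ref{ThmNaive} to the multi-marked setting. The key observation is that $\?{\s{M}}_{0,s+2}$ has dimension $s-1$, so the insertion $\wt{\psi}_{s+1}^{s-1}\cdot \ev_{s+1}^*[\pt]$ is designed precisely to rigidify both the domain modulus and the image location: it should cut the class $[\s{M}^{\log}_{0,\Delta_{\qq}}(\wt{Y}^{\dagger},\beta)]^{\vir}$ down to a zero-cycle supported on isolated stable log maps $\varphi:\bb{P}^1\rar \wt{Y}$ of class $\beta$ passing through a generic point $y\in Y\setminus D$, with marked points $x_1,\ldots,x_s$ at generically specified positions on $\bb{P}^1$ and intersection multiplicities along $\wt{D}$ dictated by $\Delta_{\qq}$. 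Away from the moduli-theoretic boundary the source is smooth irreducible and disjoint from $\wt{D}$ outside $\{x_1,\ldots,x_s\}$, the obstruction theory trivializes in the expected way, and the virtual class coincides with the actual fundamental class, contributing exactly the naive count.

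The main obstacle, just as in Theorem \ref{ThmNaive}, is ruling out contributions from boundary strata of the moduli space, in particular maps whose domain has components mapping into $\wt{D}$ or whose tropical type of intersection with $\wt{D}$ does not match the profile specified by $\Delta_{\qq}$. In the multi-marked setting the combinatorics of such degenerations is significantly richer than in the single-marked case, and controlling them will be the most delicate part of the argument. The plan is to invoke the tropical/logarithmic machinery of Gross-Siebert-Abramovich-Chen on a sufficiently fine toric blowup of $(Y,D)$: under the toric transversality hypothesis of Conjecture \ref{ttconj} (verified for cluster log pairs in \cite{ManFrob}), the admissible tropical types of stable log maps compatible with $\Delta_{\qq}$ and the generic point constraint should be in bijection with the naive rational curves being counted, while all non-generic tropical types should have strictly positive codimension in the cycle cut out by $\wt{\psi}_{s+1}^{s-1}\cdot \ev_{s+1}^*[\pt]$ and therefore contribute nothing. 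Combining this with Theorem \ref{PastNaiveThm} would then establish Conjecture \ref{FrobConj} exactly in the cases where Conjecture \ref{FrobConjNaive} is known, with the expectation that further progress on the naive version transfers directly.
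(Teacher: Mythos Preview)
The statement you are attempting to prove is labeled in the paper as a \emph{conjecture}, not a theorem, and the paper gives no proof of it. The only cases recorded as known are cluster Fano pairs, and that result is imported from \cite{ManFrob} (Thm.~1.4 and Prop.~6.1), not established here. So there is no ``paper's own proof'' to compare against; your proposal is an attempt to supply an argument the paper does not claim to have.

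On the substance of your proposal: what you outline is not a proof of Conjecture~\ref{FrobConj} as stated (``for any Fano pair''), but at best a conditional implication of the form ``Conjecture~\ref{FrobConjNaive} $+$ Conjecture~\ref{ttconj} $\Rightarrow$ Conjecture~\ref{FrobConj}.'' Both hypotheses are themselves open in general, so this would not settle the conjecture; it would only reproduce the cluster case already covered by \cite{ManFrob}, and there the implication is not needed since \cite{ManFrob} proves Conjectures~\ref{FrobConj} and \ref{ttconj} directly. Moreover, your key step---the identity $N_\beta(q_1,\ldots,q_s)=N_\beta^{\naive}(q_1,\ldots,q_s)$ for \emph{arbitrary} tuples $q_i\in B(\bb{Z})$---is asserted rather than argued. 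The paper's own machinery (Lemmas~\ref{irr} and \ref{tt}) treats only the very special situation $q_i=[D_{j_i}]$ with all $|q_i|=1$, where the single marked point in $\s{M}_{0,1}(Y,\beta)$ and the Fano degree bound drive the irreducibility argument; extending this to general tangency profiles with $|q_i|>1$ and to toric blowups requires genuinely new control over boundary strata that you have not supplied. Invoking Conjecture~\ref{ttconj} does not by itself give irreducibility or unobstructedness of the contributing log maps, so the passage from virtual to naive counts remains a gap.
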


\begin{rmk}\label{logFCA}
Using the Fundamental Class Axiom, we can modify the above setup slightly.  For $s=1$, one has that $\langle \vartheta_q\rangle$ is $1$ if $q=[Y]$ and $0$ otherwise, so as in Conjecture \ref{FrobConjNaive}, the right-hand side of \eqref{spoint} can be viewed as taking the $\vartheta_{[Y]}$-coefficient of the product.  For $s\geq 2$, one can remove the marked point $x_{s+2}$, replacing the tropical degree $\Delta_{\qq}$ with $\Delta'_{\qq}:=\Delta_{\qq}|_{\{1,\ldots,s,s+1\}}$, and decreasing the power of $\wt{\psi}_{{s+1}}$ in \eqref{Nbeta} by $1$, yielding 
\begin{align*}
        N_{\beta}(q_1,\ldots,q_s) = \int_{[\s{M}^{\log}_{0,\Delta'_{\qq}}(\wt{Y}^{\dagger},\beta)]^{\vir}} \ev_{s+1}^*[\pt] \cdot \wt{\psi}_{{s+1}}^{s-2}.
\end{align*}
This is the version of $N_{\beta}$ used in \cite[arXiv v1, \S 0.4]{GHK1}, and it will be useful to us in \S \ref{PeriodSection}.  The advantage of the version of $N_{\beta}$ in \eqref{Nbeta} is just that it elegantly includes the $s=1$ cases.
\end{rmk}

We will want the curves from the Gromov-Witten counts of Conjecture \ref{FrobConj} to satisfy the following additional property:
\begin{conj}[Toric transversaility conjecture]\label{ttconj}
In the definition of $N_{\beta}(q_1,\ldots,q_s)$ in \eqref{Nbeta}, all curves of $\s{M}^{\log}_{0,\Delta_{\qq}}(\wt{Y}^{\dagger},\beta)$ which satisfy generic representatives of the conditions $\ev_{s+1}^*[\pt]$ and $\wt{\psi}_{s+1}^{s-1}$ are torically transverse.
\end{conj}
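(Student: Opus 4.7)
The plan is to apply the log Gromov--Witten decomposition formula to stratify $\s{M}^{\log}_{0,\Delta_{\qq}}(\wt{Y}^{\dagger},\beta)$ by rigid tropical type $\tau$, writing each piece as $\s{M}_\tau$, and then to show that any stratum corresponding to a non-torically-transverse type contributes zero after pairing with $\ev_{s+1}^*[\pt]\cdot\wt{\psi}_{s+1}^{s-1}$. Toric transversality translates tropically to the condition that, apart from the marked-point vertices carrying the prescribed contact orders, every vertex of the dual tropical curve is mapped to the zero cone (the ``$[Y]$'' vertex) and no edge is contracted into a positive-codimension cone. Thus the ``bad'' locus is the union of strata indexed by types in which some additional vertex is forced into the interior of a positive-dimensional cone, and the goal reduces to showing that each such stratum is too small to meet the given incidence conditions.

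Carrying this out, I would establish a virtual dimension inequality $\vdim\s{M}_\tau\leq\dim Y+s-1-c(\tau)$ with $c(\tau)>0$ for every non-transverse type $\tau$. The idea is that if a vertex $v$ of $\tau$ is forced into the interior of a cone of codimension $c_v\geq 1$, then the corresponding irreducible component $C_v\subset C$ maps into a boundary stratum $S_v\subset\wt{Y}$ of dimension $\dim Y-c_v$, and a standard splitting exhibits $\s{M}_\tau$ (up to finite maps) as assembled from log moduli stacks whose targets are the boundary strata of $\wt{Y}^{\dagger}$. Summing the expected dimensions of these factors and subtracting the gluing-at-nodes corrections yields the required inequality. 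Since the class $\ev_{s+1}^*[\pt]\cdot\wt{\psi}_{s+1}^{s-1}$ has codimension exactly $\dim Y+s-1$, matching the virtual dimension of the full moduli space, any non-transverse stratum is strictly too small to support this pairing, and only torically transverse curves contribute.

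The main obstacle I foresee is the psi class $\wt{\psi}_{s+1}$: on the log moduli stack it differs from the pull-back of the usual $\psi$-class by corrections supported precisely on the log boundary strata under analysis, so one cannot simply treat $\wt{\psi}_{s+1}^{s-1}$ as a codimension-$(s-1)$ cycle transverse to every stratum. A comparison argument (in the spirit of the log-versus-ordinary psi-class identities of Ranganathan and collaborators) is needed to ensure that, on each non-transverse stratum, the restriction of $\wt{\psi}_{s+1}$ is represented by a class of codimension at most $s-1$. A secondary subtlety is the possibility of a component $C_v$ covering a boundary divisor multiply and deforming freely inside it, which would inflate $\vdim\s{M}_\tau$; this is harmless provided $\beta$ pairs positively with every component of $\wt{D}$ meeting $C_v$, a condition one would want to engineer by choosing the toric blowup $(\wt{Y},\wt{D})$ appropriately, and which holds automatically in the cluster and torus-containing settings of Theorem \ref{PastNaiveThm}.
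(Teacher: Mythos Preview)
The statement you are attempting to prove is labeled as a \emph{conjecture} in the paper, and the paper does not contain a proof of it. The only case asserted is the cluster Fano case, which is cited from \cite{ManFrob} (Thm.~1.4 and Prop.~6.1) without argument; for general Fano pairs the paper merely expresses hope that the methods of \cite{GSInt2} will eventually apply. So there is no paper proof to compare your proposal against.

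Regarding your proposal as a standalone attempt: the central gap is a mismatch between what you argue and what the conjecture asserts. You aim to show that each non-torically-transverse stratum $\s{M}_\tau$ satisfies $\vdim\s{M}_\tau < \dim Y + s - 1$, so that it ``contributes zero after pairing'' with the insertion. But the conjecture is not about the vanishing of an integral; it says that the \emph{actual} locus of curves satisfying generic representatives of the point and $\wt{\psi}$ conditions lies entirely in the torically transverse locus, and this is exactly how the paper uses it (in Lemma~\ref{tt} and in the projection-formula step of the main proof). A stratum $\s{M}_\tau$ can have actual dimension far exceeding its virtual dimension whenever the deformation theory is obstructed, and in that case curves in $\s{M}_\tau$ may well lie in $\ev_{s+1}^{-1}(y)$ and in the support of $s-1$ generic sections of $\s{L}_{s+1}^{\log}$ while still contributing zero to $N_\beta$. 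Your own remark about a component covering a boundary divisor and ``deforming freely inside it'' is precisely this phenomenon. To prove the conjecture as stated one needs an honest dimension bound on the bad loci (something in the spirit of Lemma~\ref{LTlem} applied inside the boundary strata of $\wt{Y}$), not a virtual one, and no such bound is supplied.
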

Here, when we say that $\varphi:=[\varphi^{\dagger}:C^{\dagger}\rar \wt{Y}^{\dagger}]$ satisfies a generic representative of $\ev_{s+1}^*[\pt]$, we just mean that $\varphi(x_{s+1})$ equals some genercally specified point $y$ of $\wt{Y}$.  We say $\varphi$ satisfies a generic representative of $\wt{\psi}_{s+1}^{s-1}$ if it is in the intersection of the supports of the divisors associated to $s-1$ generically specified rational sections of the line bundle $\s{L}^{\log}_{s+1}$, i.e., if it is in the support of a generically specified representative of the Euler class of $(\s{L}^{\log}_{s+1})^{\oplus (s-1)}$.

\begin{thm}[\cite{ManFrob}, Thm. 1.4 and Prop. 6.1]
Conjectures \ref{FrobConj} and \ref{ttconj} are satisfied for all cluster Fano pairs.
\end{thm}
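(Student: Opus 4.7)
The plan is to use the cluster structure of $(Y,D)$ to establish Conjecture \ref{ttconj} first, and then reduce Conjecture \ref{FrobConj} to Conjecture \ref{FrobConjNaive}, which is already known in this setting by Theorem \ref{PastNaiveThm}. The essential geometric input is that $Y\setminus D$ admits (up to codimension $2$) a positive atlas of algebraic tori coming from the seeds of the underlying cluster $\s{X}$-variety, so that $(Y,D)$ locally looks toric everywhere one needs to do geometry.

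For toric transversality, I would fix $[\varphi^{\dagger}:C^{\dagger}\rar\wt{Y}^{\dagger}]\in \s{M}^{\log}_{0,\Delta_{\qq}}(\wt{Y}^{\dagger},\beta)$ satisfying $\varphi(x_{s+1})=y$ for a generic $y\in \wt{Y}$ and lying in the support of a generic representative of the Euler class of $(\s{L}^{\log}_{s+1})^{\oplus(s-1)}$.  Because $y$ can be taken in the open torus chart of a seed, a codimension count shows that any component of $C$ contracted into $\wt{D}$, or any extra tangency of $\varphi(C)$ with $\wt{D}$ beyond those prescribed at $x_1,\ldots,x_s$, strictly drops the expected dimension of the stratum of maps to which $\varphi^\dagger$ belongs, and therefore such $\varphi$ cannot simultaneously meet the codimension $\dim Y+s-1$ cut out by the point and descendent conditions. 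Combined with the cluster chart covering, this yields that the generic $\varphi$ is torically transverse in some seed.

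Granted Conjecture \ref{ttconj}, I would next identify $N_{\beta}(q_1,\ldots,q_s)=N^{\naive}_{\beta}(q_1,\ldots,q_s)$. Each torically transverse rational stable log map obeying the constraints is rigid and unobstructed inside its torus chart, so the virtual log Gromov-Witten contribution to \eqref{Nbeta} at $[\varphi^{\dagger}]$ is $1$; this is the log-geometric analogue of the classical statement that torically transverse rational curves with prescribed tangencies to the toric boundary count with multiplicity $1$, and the $\wt{\psi}_{s+1}^{s-1}$ insertion can be geometrically interpreted via generic sections of $\s{L}^{\log}_{s+1}$ as imposing incidence with a common codimension-$(s-1)$ cycle that, in a torus chart, cuts out precisely the isolated maps counted naively. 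It follows that $\langle \cdot \rangle = \langle \cdot \rangle^{\naive}$ on $\QH^0_{\log}(Y,D)$, and Conjecture \ref{FrobConj} is then immediate by taking the same product $*$ provided by Theorem \ref{PastNaiveThm}.

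The main obstacle is proving toric transversality robustly: one must exclude \emph{all} degenerations in $\s{M}^{\log}_{0,\Delta_{\qq}}(\wt{Y}^{\dagger},\beta)$ where components of $C$ sink into $\wt{D}$, or bubble off with unexpected intersections, while still satisfying the point and descendent conditions. The descendent insertion $\wt{\psi}_{s+1}^{s-1}$ is particularly delicate since it lives on the moduli stack, not on the target, and does not translate transparently into a geometric condition on $\varphi(C)$. It is here that the cluster mutational atlas is essential: it lets one choose toric charts adapted simultaneously to the generic point $y$ and to generic representatives of the $\wt{\psi}$-class, reducing the problem to a tropical/toric incidence count in each chart where standard generic-dimension arguments eliminate the bad loci.
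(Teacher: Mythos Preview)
The paper does not prove this theorem at all: it is stated purely as a citation of \cite{ManFrob}, Thm.~1.4 and Prop.~6.1, with no argument given here. So there is no ``paper's own proof'' to compare your proposal against. Your write-up is therefore not a reconstruction of anything in this paper but an independent sketch of how one might try to establish the cited result.

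As for the sketch itself, the overall logic---prove toric transversality, deduce $N_\beta=N_\beta^{\naive}$, then inherit the algebra structure from Theorem~\ref{PastNaiveThm}---is coherent, but the substantive step (Conjecture~\ref{ttconj}) is not actually carried out. Your dimension count for excluding components sinking into $\wt D$ is asserted rather than proved, and you yourself flag that the $\wt\psi_{s+1}^{s-1}$ insertion ``does not translate transparently into a geometric condition on $\varphi(C)$''; the final paragraph then appeals to ``standard generic-dimension arguments'' in cluster charts without specifying them. That is precisely the content of Prop.~6.1 in \cite{ManFrob}, and it is not something one can wave through: the cluster atlas only covers $Y\setminus D$ up to codimension $2$, so one must argue separately that relevant curves avoid the missing locus, and one must control how the descendent condition interacts with boundary strata of the moduli space. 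In short, your proposal identifies the right architecture but leaves the load-bearing lemma as a black box, which is exactly why the paper defers to \cite{ManFrob} rather than reproving it.
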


\begin{rmk}
As announced in \cite[Thm. 2.2]{GSInt}, upcoming work of Gross and Siebert \cite{GSInt2} will construct an algebra of theta functions from pairs $(Y,D)$ as in Conjecture \ref{FrobConj} for all log Calabi-Yau varieties $(Y,D)$ (and for many other spaces as well).  They show that their theta functions satisfy \eqref{spoint} at least for $s\leq 3$, and one expects the higher-$s$ cases to hold as well.  We hope that Conjecture \ref{ttconj} or some similarly useful condition will also hold in this general setting.
\end{rmk}

\subsection{Classical Periods}\label{cper}
Suppose that either Conjecture \ref{FrobConjNaive} or Conjecture \ref{FrobConj} holds, and let $A$ denote the $\bb{Q}[\NE(Y)]$-algebra obtained by equipping $\QH_{\log}^0(Y,D)$ with the product $*$.  Given $f\in A$ and any $d\in \bb{Z}_{\geq 0}$, we can write $$f^d=\sum_{q\in B(\bb{Z})} c_{f,d,q} \vartheta_q$$ 
for uniquely determined coefficients $c_{f,d,q}\in \bb{Q}[\NE(Y)]$.  In particular, let $c_{f,d}:=c_{f,d,[Y]}$ denote the $\vartheta_{[Y]}$-coefficient of $f^d$.  We can write $c_{f,d}=\sum c_{\beta,f,d}z^{\beta}$, where the sum is over a finite collection of $\beta\in \NE(Y)$ and $c_{\beta,f,d}\in \bb{Q}$.  We assume $f$ is such that if $c_{\beta,f,d}\neq 0$, then $\beta.[D]\geq d$.

\begin{dfn}\label{cper1}
Consider $f\in A$ as above.  The \textbf{classical period} of $f\in A$ is defined to be
\begin{align*}
    \pi_f:=\sum_{d=0}^{\infty} c_{f,d} \in \bb{Q}\llb \NE(Y)\rrb.
\end{align*}
\end{dfn}
The assumption that $\beta.[D]\geq d$ whenever $c_{\beta,f,d}\neq 0$ ensures that $\pi_f$ is a well-defined formal series. The primary example we have in mind, essentially as proposed by \cite{CPS}, is $f=W:=\vartheta_{[D_1]}+\ldots+\vartheta_{[D_s]}$ where $D_1,\ldots,D_s$ are the irreducible components of $D$.  The assumption is indeed easily seen to hold for $f=W$.

We are now prepared to state our second main theorem:

\begin{thm}\label{FanoThm}
Suppose either Conjecture \ref{FrobConjNaive} holds or Conjectures \ref{FrobConj} and \ref{ttconj}
 hold for a Fano pair
$(Y,D)$.  Let $D_1,\ldots,D_s$ denote the irreducible components of $D$, and let $W:=\vartheta_{[D_1]}+\ldots+\vartheta_{[D_s]}$.  Then 
\begin{align}\label{GYpiW}
    \wh{G}_Y = \pi_W.
\end{align}
In particular, by \cite{ManFrob}, this holds for cluster Fano pairs, and by \cite{KY}, this holds for Fano pairs whose interior is a smooth variety containing a Zariski open torus.
\end{thm}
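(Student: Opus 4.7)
The plan is to identify both $\pi_W$ and $\wh{G}_Y$ with the same enumerative generating series, using Theorem \ref{ThmNaive} on the $\wh{G}_Y$ side and exploiting associativity/multilinearity of the Frobenius product on the $\pi_W$ side. First I would unpack the definition of $\pi_W$: the $s$-point formula \eqref{spoint} (or its naive analog \eqref{Naive-def}), together with the convention $\langle \vartheta_q\rangle = \delta_{q, [Y]}$ from Remark \ref{logFCA}, gives $c_{W,d} = \langle W^{*d}\rangle = \langle \underbrace{W, \ldots, W}_{d}\rangle$. Expanding $W = \sum_{i=1}^s \vartheta_{[D_i]}$ via $\bb{Q}[\NE(Y)]$-multilinearity of the $s$-point function yields
\begin{align*}
c_{W,d} = \sum_{(j_1, \ldots, j_d) \in \{1,\ldots,s\}^d} \langle \vartheta_{[D_{j_1}]}, \ldots, \vartheta_{[D_{j_d}]}\rangle.
\end{align*}

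The second step translates each $s$-point function into a naive count $N_\beta^{\naive}([D_{j_1}],\ldots,[D_{j_d}])$. Under Conjecture \ref{FrobConjNaive} this is the definition \eqref{spoint0}. Under Conjectures \ref{FrobConj} and \ref{ttconj}, toric transversality forces all contributing curves to meet $D$ transversally with all intersections occurring at marked points, so the log virtual count collapses to the set-theoretic naive count---by essentially the same mechanism underlying the proof of Theorem \ref{ThmNaive}. In either setting one obtains $c_{W,d} = \sum_{\beta} z^{\beta} \sum_{(j_1, \ldots, j_d)} N_\beta^{\naive}([D_{j_1}], \ldots, [D_{j_d}])$.

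Third comes the combinatorial matching: for each qualifying class-$\beta$ map $\varphi: C \to Y$, each point $\varphi(x_i) \in D$ lies on a uniquely determined irreducible component $D_{j_i^*}$, so $\varphi$ contributes to exactly one ordered tuple $(j_1^*,\ldots,j_d^*)$. The inner sum therefore counts each qualifying $\varphi$ exactly once, producing the total number of class-$\beta$ maps $\bb{P}^1 \to Y$ passing through the generic point $y$ (via $x_{d+1}$) with $\varphi^{-1}(D)$ equal to a generic $d$-tuple of points in $\bb{P}^1$. Since $\beta\cdot[D] = d_\beta$, this vanishes unless $d = d_\beta$, in which case Theorem \ref{ThmNaive} identifies it with $p_\beta$. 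Hence $c_{\beta, W, d} = \delta_{d, d_\beta}\, p_\beta$, and summing yields $\pi_W = \sum_\beta p_\beta z^\beta = \wh{G}_Y$. The degenerate cases are easily verified directly: $c_{0, W, 0} = 1 = p_0$ arises from $W^{*0} = \vartheta_{[Y]}$, while for $d_\beta = 1$ both sides vanish ($p_\beta = 0$ by convention, and $c_{W,1} = 0$ because $W$ has no $\vartheta_{[Y]}$-component).

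The main obstacle is the GW-to-naive reduction in the second step: showing that, under Conjecture \ref{ttconj}, the log Gromov--Witten invariant $N_\beta([D_{j_1}], \ldots, [D_{j_d}])$ with all tangency multiplicities equal to $1$ agrees with the enumerative count $N_\beta^{\naive}$. One expects the moduli space $\s{M}^{\log}_{0, \Delta_{\qq}}(\wt{Y}^{\dagger},\beta)$, cut down by the generic representatives of $\ev_{s+1}^*[\pt]$ and $\wt{\psi}_{s+1}^{s-1}$, to reduce (under toric transversality) to a finite set of reduced points corresponding to torically transverse maps, so that the virtual structure contributes trivially. Since this is the same identification that powers Theorem \ref{ThmNaive}'s equality of $p_\beta$ with a naive count, the techniques should transfer directly.
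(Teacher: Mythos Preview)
Your proposal is correct and follows essentially the same route as the paper: expand $W^d$ by multilinearity, apply the Frobenius-structure identity \eqref{spoint} (or \eqref{Naive-def}) to write $c_{W,d}$ as a sum of curve counts, then match combinatorially with $p_\beta$. Your ordered-tuple expansion is equivalent to the paper's expansion over $s$-partitions $P$ of $d$ with multinomial weights $\binom{d}{P}$.

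Two organizational differences are worth noting. First, in the Gromov--Witten case the paper does \emph{not} reduce $N_\beta(\qq_P)$ to $N_\beta^{\naive}(\qq_P)$ before comparing with $p_\beta$; instead it pushes forward directly along $\forget:\s{M}^{\log}_{0,\Delta'_{\qq_P}}(Y^{\dagger},\beta)\rar \s{M}_{0,1}(Y,\beta)$, using that $\forget$ is generically $P(1)!\cdots P(s)!$-to-one on the torically transverse locus and that $\wt\psi_{d+1}=\forget^*\psi_x$ there, so the projection formula gives $\sum_{P|d}\binom{d}{P}N_\beta(\qq_P)=d!\int_{[\s{M}_{0,1}(Y,\beta)]^{\vir}}\psi_x^{d-2}\ev_x^*[\pt]$ in one stroke. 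Second, the paper proves Theorems \ref{ThmNaive} and \ref{FanoThm} \emph{simultaneously} in a single argument, so your invocation of Theorem \ref{ThmNaive} as a black box is slightly anachronistic in that context; but the ingredients you would need to unpack it (irreducibility from Lemma \ref{irr}, unobstructedness from Lemma \ref{LTlem}, toric transversality from Lemma \ref{tt}, and interpreting $\psi_x^{d-2}$ via pullback from $\?{\s{M}}_{0,d+1}$ as fixing the marked domain) are exactly those the paper uses, so no new idea is missing.
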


\subsubsection{Relation to the usual notion of classical periods}

Definition \ref{cper1} is somewhat different looking from the definition of a classical period in \cite{CCGGK}.  We explain the relationship now.  Let $f\in \bb{Q}[\NE(Y)][z_1^{\pm 1},\ldots,z_n^{\pm 1}]$ be a Laurent polynomial with coefficients in $\bb{Q}[\NE(Y)]$.  So each coefficient has the form $cz^{\beta}$ for some $c\in \bb{Q}$ and $\beta \in \NE(Y)$.  We assume that each $\beta$ here is nonzero.  Then as in \cite[Def. 3.1]{CCGGK} (but with more general coefficients), one defines the classical period of $f$ to be\footnote{Morally, the torus $\gamma:=\{|z_1|=\ldots=|z_n|=1\}$ should be the class of an SYZ fiber of the mirror $\Spec A$.  The form $\frac{dz_1}{z_1}\wedge \cdots \wedge \frac{dz_n}{z_n}$ can be viewed as the holomorphic volume form $\Omega$ on the mirror with log poles along the boundary, normalized so that $\int_{\gamma} \Omega=1$.}
\begin{align}\label{pifint}
\pi_f:=\left(\frac{1}{2\pi i}\right)^{n} \int_{|z_1|=\ldots=|z_n|=1} \frac{1}{1-f} \frac{dz_1}{z_1}\wedge \cdots \wedge \frac{dz_n}{z_n}.
\end{align}
Equivalently, by expanding $\frac{1}{1-f}$ as a power series in $\bb{Q}\llb \NE(Y)\rrb$ and repeatedly applying the residue theorem, we have $\pi_f= \sum_{d=0}^{\infty} c_{f,d}$, where $c_{f,d}\in \bb{Q}[\NE(Y)]$ now means the degree-zero term in the Laurent polynomial expansion of $f^d$.

Now, using a tropical description of the theta functions $\vartheta_q$ (e.g., in terms of broken lines), one obtains, for each generic point $Q$ in the tropicalization $Y^{\trop}$ of $(Y,D)$, a local chart $\varphi_Q:A\rar \bb{Q}[\Lambda]\llb \NE(Y)\rrb$ for $\Spec A$.  Here $\Lambda$ denotes the lattice of integral tangent vectors at $Q$, and choosing a basis identifies $\bb{Q}[\Lambda]$ with $\bb{Q}[z_1^{\pm 1},\ldots,z_n^{\pm 1}]$, where $n$ is the dimension of the tropicalization (so $n=\dim(Y)$ iff $(Y,D)$ has maximal boundary).  Note that the description of $\pi_f$ as in \eqref{pifint} in terms of constant coefficients of powers of $f$ still makes sense in this formal setting, although the terms $c_{f,d}$ might a priori live in the completion $\bb{Q}\llb \NE(Y)\rrb$.

It is always the case that $\varphi_Q(\vartheta_{[Y]})=1$, while for any $q\in B(\bb{Z})\setminus \{[Y]\}$, $\varphi_Q(\vartheta_q)$ will contain no constant terms.  It follows that taking the $\vartheta_{[Y]}$-coefficient of an element $f\in A$ is equivalent to taking the constant coefficient of $\varphi_Q(f)$ (in particular, each $c_{f,d}$ does in fact live in $\bb{Q}[\NE(Y)]$).  So the two notions of classical period agree!

\begin{rmk}\label{formalCoeff}
According to \cite[Conj. 1.10]{Tonk}, one expects such charts $A\rar \bb{Q}[\NE(Y)][\Lambda]$ as above but without the formal completion to correspond to monotone Lagrangian tori up to Hamiltonian isotopy.  So taking coefficients in $\bb{Q}\llb \NE(Y)\rrb$ rather than just in $\bb{Q}$ seems to allow us to define classical periods via \eqref{pifint} more generally than might otherwise be possible.
\end{rmk}

\begin{eg}
When $(Y,D)$ is a cluster Fano pair, the mirror $\Spec A$ is the Langlands dual cluster variety \cite[Thm. 1.4]{ManFrob}, meaning that $\Spec A$ contains many algebraic tori corresponding to the different clusters.  Restricting $W$ to any of these yields a finite Laurent polynomial, and the description of $\pi_W$ as in \eqref{pifint} applies.  Indeed, in these cases, a perturbation of $Y^{\trop}$ will contain a cell complex, essentially part of the cluster complex, and for generic $Q$ in the interior of one of these cells, $\varphi_Q$ corresponds to the inclusion of a corresponding cluster torus in $\Spec A$.  Choosing a different algebraic torus in $\Spec A$ will result in a different expression for $f$, e.g., related by some sequence of mutations.  One therefore considers Laurent polynomials up to mutation equivalence as in \cite{CCGGK}.
\end{eg}

\subsection{Further history and context}

Landau-Ginzburg mirrors for toric Fano manifolds first appeared in Hori-Vafa \cite{HV}.  A description of the superpotentials in terms of certain counts of Maslov index $2$ holomorphic disks was then proved in \cite{CO}.  The mirror correspondence $\wh{G}_Y=\pi_W$ as in \eqref{GYpiW} originally appeared in \cite{Goly}.  The tropical construction of $W$ is due to \cite{CPS}, and the application of this construction to understanding \eqref{GYpiW} was worked out in detail for $Y=\bb{P}^2$ in \cite{PrP2}.  A version of Theorem \ref{FanoThm} has been proven from the symplectic viewpoint in \cite[Thm. 1.1]{Tonk}, where $W$ is defined in terms of a choice of monotone Lagrangian torus in $Y$.  A much richer ``bulk-deformed'' version of superpotentials and such mirror correspondences was studied tropically for $\bb{P}^2$ in \cite{GrP2}, and the bulk-deformed version of \eqref{GYpiW} was recently proven for toric Fano surfaces in \cite{HLZ} using tropical disk counts and an open Gromov-Witten correspondence theorem.

\subsection*{Acknowledgements}

I wish to thank Alessio Corti, as his talks on \cite{CCGGK} at UT Austin in January, 2013 are what inspired this work.  I am also very grateful to Sean Keel and Mark Gross for keeping the author informed of their related works \cite{KY} and \cite{GSInt2}, respectively, and for additional helpful conversations.

\section{Proofs}\label{PeriodSection}

 We will repeatedly use the following result of Lehmann and Tanimoto:
\begin{lem}[\cite{LT}, Thm. 1.1]\label{LTlem}
Let $Y$ be a smooth projective weak Fano variety.\footnote{If $Y$ is only smooth as an orbifold, we can still apply Lemma \ref{LTlem} to a projective resolution of the associated singular scheme, as this is still a weak Fano variety.}   Then there is a proper closed subset $V\subsetneq Y$ such that any component of $\Mor(\bb{P}^1,Y)$ parametrizing curves not contained in $V$ will have the expected dimension.
\end{lem}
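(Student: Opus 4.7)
The plan is to combine standard deformation-theoretic and bend-and-break arguments with boundedness results for Fano-type varieties. Recall that any irreducible component of $\Mor(\bb{P}^1,Y)$ parametrizing morphisms of class $\beta$ has dimension at least the expected value $\dim Y + (-K_Y)\cdot\beta$, so the task is to show that components of \emph{strictly} larger dimension sweep out only a fixed proper closed subset of $Y$, independent of $\beta$.

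First, for a fixed class $\beta$, I would analyze an excess-dimension component $M_\beta$ via its evaluation map $\ev\colon M_\beta\times\bb{P}^1\to Y$. If $\ev$ is not dominant then the image is already a proper subvariety. If $\ev$ is dominant, then through a generic point $y\in Y$ there passes a positive-dimensional subfamily of curves in $M_\beta$, and Mori's bend-and-break forces such a family to degenerate into reducible configurations with components of strictly smaller $-K_Y$-degree. A careful dimension count shows that the excess must be accounted for by a component of the reducible curve that is genuinely special, namely one whose image is tangent to, or contained in, a subvariety $W\subsetneq Y$ on which $-K_Y$ is ``less positive'' than on $Y$ itself.

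The crux is to make this uniform in $\beta$. For this I would pass to the language of the Fujita $a$-invariant: for a subvariety $W\subseteq Y$, consider $a(W,-K_Y|_W)$, the Fujita invariant of the pullback of $-K_Y$ to a resolution of $W$. The bend-and-break analysis above shows that any excess-dimension family must be supported on subvarieties $W$ with $a(W,-K_Y|_W)>1=a(Y,-K_Y)$. The deep input, via Birkar's boundedness of Fano varieties together with constructibility of the Fujita invariant in families (with a stratification argument), is that the union of all such $W$ forms a proper closed subset $V\subsetneq Y$. Taking this $V$ yields the lemma.

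The main obstacle is precisely this uniform statement across all classes $\beta$: the single-class bend-and-break argument is essentially classical, but independence of $\beta$ is the serious content and requires Birkar's boundedness theorem together with the Lehmann--Tanimoto analysis of the Fujita invariant on subvarieties of Fano-type targets, which is what makes the cited theorem nontrivial.
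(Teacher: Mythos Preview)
The paper does not prove this lemma at all: it is stated as a quotation of \cite[Thm.~1.1]{LT} and used as a black box throughout \S\ref{PeriodSection}. So there is no ``paper's own proof'' to compare your proposal against.

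That said, your sketch is a reasonable outline of how Lehmann and Tanimoto actually prove the result. The key ingredients you identify---the lower bound on $\dim\Mor(\bb{P}^1,Y)$ from deformation theory, bend-and-break to break excess-dimensional families, the Fujita $a$-invariant to stratify subvarieties, and Birkar's boundedness to make the bad locus $V$ independent of $\beta$---are indeed the pillars of their argument. One caveat: your description of the bend-and-break step is somewhat loose. The passage from ``excess dimension forces a positive-dimensional family through a general point'' to ``this family breaks with a component lying in a high-$a$-invariant subvariety'' requires more care than you indicate; in particular, one must control the dimension of the space of reducible configurations and argue inductively on degree or via a free-curve decomposition, not just invoke bend-and-break once. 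But since the present paper treats the lemma as input rather than proving it, your sketch is more than adequate as an orientation for the reader.
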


Thus, if $\sigma$ is an open subset of $\s{M}_{0,n}(Y,\beta)$ corresponding to maps $\varphi:(C,x_1,\ldots,x_n)\rar Y$ with $C$ irreducible and $\varphi(C)$ containing a general point of $Y$, then $\dim(\sigma)=n+\dim(Y)+d_{\beta}-3$.  We now show that curves contributing to $p_{\beta}$ as in \eqref{pd} are irreducible.

\begin{lem}\label{irr}
For $Y$ Fano, suppose $[\varphi:(C,x)\rar Y]\in \s{M}_{0,1}(Y,\beta)$ lies in $\ev_x^{-1}(y)$ for a generically specified point $y\in Y$.  Let $d:=d_{\beta}$ as in \eqref{dbeta}, and suppose furthermore that $[\varphi:(C,x)\rar Y]$ lies in the support of a generically specified representative of  $\psi_x^{d-2}=c_{d-2}(\s{L}_x^{\oplus(d-2)})$ for $\s{L}_x$ as in \eqref{psic1}.  Then $C$ is irreducible. 
\end{lem}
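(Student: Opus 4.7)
I would argue by contradiction. Suppose $C$ is reducible with components $C_0, \ldots, C_{k-1}$ ($k \geq 2$), $x \in C_0$, and dual graph $\Gamma$; let $\sigma_\Gamma \subset \s{M}_{0,1}(Y,\beta)$ be the corresponding boundary stratum. Write $\beta_i$, $d_i := \beta_i.[-K_Y]$, and $n_i$ for the class, degree, and number of special (marked plus nodal) points on $C_i$, so $\sum_i d_i = d$ and $\sum_i n_i = 1 + 2(k-1) = 2k-1$.

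The crux of the argument is the dimension bound
\[
    \dim \sigma_\Gamma \leq \dim Y + d - k - 1.
\]
This I would obtain by realizing $\sigma_\Gamma$ (up to finite maps) as the fibre product of the factors $\s{M}_{0,n_i}(Y,\beta_i)$ over the $k-1$ node-gluing diagonals in $Y \x Y$. For each non-contracted component $C_i$ whose image avoids the exceptional set $V$ of Lemma \ref{LTlem}, the factor has the expected dimension $\dim Y + d_i + n_i - 3$; contracted components ($\beta_i = 0$, necessarily $n_i \geq 3$) contribute $\s{M}_{0,n_i} \x Y$ of dimension $\dim Y + n_i - 3$. Since $y := \varphi(x)$ is a generic point of $Y$ and hence lies outside $V$, the component $C_0$ is not mapped into $V$; I would propagate this control across the connected dual graph (images of adjacent components must agree at their nodal image), so every factor admits the expected dimension bound. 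Summing the contributions and subtracting the gluing codimensions yields the displayed inequality.

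Next, I would impose the generic constraints. Since $\ev_x|_{\sigma_\Gamma}$ is dominant (as $C_0$ is either non-contracted or is attached via a node to a non-contracted component), fixing $\varphi(x) = y$ cuts the dimension by $\dim Y$, leaving at most $d - k - 1$. A generic representative of the codimension-$(d-2)$ class $\psi_x^{d-2}$, chosen inside its rational equivalence class on the smooth Deligne--Mumford ambient, meets this substack in the expected dimension $(d-k-1) - (d-2) = 1 - k$. For $k \geq 2$ this is negative, so the intersection is empty; but $[\varphi]$ is assumed to lie in it, a contradiction, forcing $k = 1$ and so $C$ irreducible.

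The principal obstacle will be handling components whose image lies entirely in the Lehmann--Tanimoto exceptional set $V$, for which Lemma \ref{LTlem} gives no direct dimension control. I would address this by a noetherian induction on $V$, applying Lemma \ref{LTlem} to a resolution of $V$ (or to the weak Fano pieces of a suitable stratification) and absorbing any unavoidable excess dimension into the slack available in the $\psi_x^{d-2}$ cut, since that cut imposes strictly more codimension than the stratum has to spare. A secondary point is the moving-cycle claim for $\psi_x^{d-2}$ on each boundary stratum, which is a standard but non-trivial intersection-theoretic step on smooth Deligne--Mumford stacks.
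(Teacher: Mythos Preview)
Your global dimension count is a natural first instinct, but it has a genuine gap exactly where you flag it, and the proposed patches do not close it. The bound $\dim\sigma_\Gamma\le\dim Y+d-k-1$ is only valid when every factor $\s{M}_{0,n_i}(Y,\beta_i)$ has the expected dimension. Lemma~\ref{LTlem} guarantees this only for components whose image is not contained in the exceptional set $V$. Your ``propagation'' step fails: the fact that $C_0$ passes through the generic $y\notin V$ says nothing about where the nodal image $\varphi(C_0\cap C_1)$ lands, so $C_1$ (and components further out in the tree) may well map into $V$ with uncontrolled excess dimension. The noetherian induction you sketch cannot rescue this, since $V$ need not be weak Fano and Lemma~\ref{LTlem} gives no bound whatsoever on the excess; for $k=2$ your slack is exactly $1$, while the excess for a single bad component can be arbitrarily large. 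The moving-lemma step for a generic rational representative of $\psi_x^{d-2}$ on a singular Deligne--Mumford boundary stratum is also not free, and in any case the paper's ``generic representative'' means the common zero locus of $d-2$ generic sections of $\s{L}_x$, which does not move in its rational equivalence class.

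The paper's proof avoids all of this by never trying to control the whole stratum. The key observation is that $\s{L}_x$ restricted to $\sigma_\Gamma\cap\ev_x^{-1}(y)$ is pulled back along a forgetful map that only remembers what happens near $x$: either the map to $\?{\s{M}}_{0,s+1}$ recording how the non-contracted neighbours of the contracted bubble through $x$ are attached (when $\deg C_x=0$), or the map to $\s{M}_{0,1}(Y,\deg C_x)$ remembering only the component through $x$ (when $\deg C_x>1$). In each case the target has dimension strictly less than $d-2$, so $d-2$ generic sections of the pulled-back line bundle have empty common zero locus, and no curve on $\sigma_\Gamma$ can satisfy the $\psi_x^{d-2}$ condition. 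This is insensitive to what the distant components do, so the Lehmann--Tanimoto exceptional set never enters beyond the easy observation that $C_x$ itself cannot have degree $1$ through a generic point.
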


\begin{proof}
First note that because we have only one marked point, we cannot have a contracted component without having at least two non-contracted components as well.  So it suffices to prove that curves $\varphi:C\rar Y$ satisfying the generically specified conditions cannot have multiple non-contracted components.

By the Fano assumption, every non-contracted component contributes positively to the intersection with the boundary.  Hence, if $\varphi:C\rar Y$ does have multiple non-contracted components, then the component $C_x$ containing the marked point $x$ has degree strictly less than $d$.  If $\deg(C_x)=1$, then $C_x$ cannot contain a general point $y$ (using Lemma \ref{LTlem}), thus ruling out this case.  Similarly, if $d=2$ and $\deg(C_x)=0$, then $C$ must include two degree $1$ components which hit the general point $y$, again a contradiction.

Next consider the cases with $\deg(C_x)=0$ and $d>2$.  Let $C^0$ be the union of all components of $C$ contracted by $\varphi$, and let $C_x^0$ be the connected component of $C^0$ containing $C_x$.  Let $C_x'$ be the union of all components of $C$ which intersect $C_x^0$.  Let $C_1,\ldots,C_s$ be the components of $C_x'$ of positive degree, and let $d_i$ denote the degree of $C_i$.  Since $\varphi(C_i)$ intersects the generically specified point $y$, we must have each $d_i\geq 2$.

Let $\sigma$ be the smallest closed stratum of $\s{M}_{0,1}(Y,d)$ which contains this $[\varphi:(C,x)\rar Y]$, which we will denote as just $C$ for convenience.  Let $\rho:=\ev_x^{-1}(y)\cap\sigma$.  We have a map $\forget_{\rho}:\rho\rar \?{\s{M}}_{0,s+1}$ coming from contracting the components $C_1,\ldots,C_s$ of $C_x'$ and labelling the image of $C_i$ in $C_x^0$ by $x_i$.  
Since each $d_i$ is at least $2$, and since $\sum_{i=1}^s d_i\leq d$, we have $s\leq \lfloor\frac{d}{2}\rfloor$, and so $\dim \?{\s{M}}_{0,s+1}\leq \lfloor\frac{d}{2}\rfloor-2< d-2$.  But $\forget_{\rho}$ does not destabilize the curve-components containing $x$, so $\s{L}_x|_{\rho} = \forget_{\rho}^* \?{\s{L}}_x$ where $\?{\s{L}}_x$ denotes the corresponding $\psi$-class line bundle on $\?{\s{M}}_{0,s+1}$.  But the dimension implies that $\?{\psi}_x^{d-2}=0$ on $\?{\s{M}}_{0,s+1}$, where $\?{\psi}_x:=c_1(\?{\s{L}}_x)$.  Thus, $\rho$ cannot be in the support of a general representative of $\psi_x^{d-2}$, a contradiction.

Finally, suppose that $\deg(C_x)>1$.  As before, let $\sigma$ be the minimal closed stratum of $\s{M}_{0,1}(Y,d)$ which contains this $C$, and let $\rho=\ev_x^{-1}(y)\cap \sigma$.  We have a map $\forget_{\rho}:\rho\rar \s{M}_{0,1}(Y,\deg(C_x))$ which again does not destabilize the curve-components containing $x$. 
So we have as before that $\s{L}_x|_{\rho}=\forget_{\rho}^* \?{\s{L}}_x$, where $\?{\s{L}}_x$ now denotes the $\psi$-class line bundle on $\s{M}_{0,1}(Y,\deg(C_x))$.  By Lemma \ref{LTlem} and the point condition, the image of $\forget_{\rho}$ is contained in an open stratum of dimension strictly less than $d-2$, thus cannot satisfy a generic $\?{\psi}_x^{d-2}$-condition.  This gives our contradiction and completes the proof.
\end{proof}

\begin{lem}\label{tt}
Suppose that either Conjecture \ref{FrobConjNaive} or Conjecture \ref{ttconj} holds for $(Y,D)$.  Then all curves in $\s{M}_{0,1}(Y,\beta)$ which satisfy generically specified representatives of the point and $\psi$-class conditions are torically transverse.
\end{lem}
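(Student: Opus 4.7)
\begin{myproof}[Proof plan]
My plan is to argue by contradiction, using the fact from Lemma \ref{irr} that any such $\varphi$ has irreducible source $C=\bb{P}^1$. Suppose $\varphi:\bb{P}^1\rar Y$ satisfies the generic point and $\psi$-class conditions yet fails to be torically transverse, so that $\varphi(\bb{P}^1)$ meets $D$ at some point lying on two or more irreducible components.

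First I would choose a toric blowup $\eta:(\wt{Y},\wt{D})\rar(Y,D)$ centered at the non-transverse deep strata so that the proper transform $\wt\varphi:\bb{P}^1\rar\wt{Y}$ is torically transverse to $\wt{D}$, with tropical degree $\Delta_{\wt\varphi}=(q_1,\ldots,q_s)$ where $q_i=[k_iD_i']$ and $\sum_ik_i=\wt d:=\wt\varphi\cdot\wt{D}$. Since $(\wt{Y},\wt{D})$ remains log Calabi-Yau, the discrepancy formula $K_{\wt{Y}}=\eta^*K_Y+\sum_E(r_E-1)E$ (summed over the exceptional divisors $E$ with centers of codimension $r_E$), combined with $\wt{D}\equiv -K_{\wt{Y}}$, yields
\[
\wt d=d-\sum_E(r_E-1)(\wt\varphi\cdot E)<d,
\]
because the non-transverse hypothesis forces at least one $E$ with $r_E\geq 2$ and $\wt\varphi\cdot E\geq 1$. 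In particular $s\leq\wt d<d$.

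Next I would produce the contradictory lower bound $s\geq d$ via a dimension count in log moduli. Regard $\wt\varphi$ as a point of $\s{M}^{\log}_{0,s+1}(\wt{Y}^\dagger,\wt\beta)$ (the $(s+1)$-marked-point version as in Remark \ref{logFCA}), whose virtual dimension equals $\dim Y+s-2$. The forgetful morphism $F:\s{M}^{\log}_{0,s+1}(\wt{Y}^\dagger,\wt\beta)\rar\s{M}_{0,1}(Y,\beta)$ that drops the $s$ log marked points and postcomposes with $\eta$ satisfies $F^*\psi_x=\wt\psi_{s+1}$ and $F^*\ev_x^*[\pt]=\ev_{s+1}^*[\pt]$, since forgetting marked points on an irreducible $\bb{P}^1$ bearing another marked point never destabilizes. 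Hence $\wt\varphi$ sits in the locus of $\s{M}^{\log}_{0,s+1}(\wt{Y}^\dagger,\wt\beta)$ cut by generic representatives of $\ev_{s+1}^*[\pt]\cdot\wt\psi_{s+1}^{d-2}$, a condition of codimension $\dim Y+d-2$. Conjecture \ref{ttconj}, applied to $\Delta_{\wt\varphi}$, then ensures that this locus realizes its expected dimension $s-d$, so non-emptiness forces $s\geq d$, yielding the desired contradiction. Under Conjecture \ref{FrobConjNaive} the same strategy goes through with the log moduli replaced by the naive moduli of maps with prescribed tangency profile, the finiteness assertion of the conjecture playing the role of the expected-dimension conclusion.

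The delicate step, and where the stated hypothesis is essential, is verifying that the pulled-back point and $\psi$-class conditions cut the log (or naive) moduli in the expected codimension rather than intersecting an excess higher-dimensional component; this is precisely what Conjecture \ref{ttconj} (or Conjecture \ref{FrobConjNaive}) supplies. Given that, the numerical inequality $\wt d<d$ closes the argument.
\end{myproof}
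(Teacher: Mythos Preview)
Your treatment of the Conjecture \ref{FrobConjNaive} case is essentially the same as the paper's: blow up until the proper transform is torically transverse, observe via the discrepancy formula that the number $s$ of boundary points drops strictly below $d$, and then use the finiteness in Conjecture \ref{FrobConjNaive} to see that once the tangency profile and the point are fixed, only the $(s{-}2)$-dimensional moduli of the domain remains, too small to meet a generic $\psi_x^{d-2}$-condition. Your discrepancy computation is in fact more explicit than the paper's terse ``$d'<d$.''

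Your treatment of the Conjecture \ref{ttconj} case, however, has a genuine gap. You invoke Conjecture \ref{ttconj} ``applied to $\Delta_{\wt\varphi}$'' to conclude that the locus in $\s{M}^{\log}_{0,s+1}(\wt Y^{\dagger},\wt\beta)$ cut by generic $\ev_{s+1}^*[\pt]\cdot\wt\psi_{s+1}^{d-2}$ has the expected dimension $s-d$. But that is not what Conjecture \ref{ttconj} says. The conjecture asserts only that curves satisfying generic representatives of $\ev_{s+1}^*[\pt]$ and $\wt\psi_{s+1}^{\,s-1}$ are torically transverse; it says nothing about $\wt\psi_{s+1}^{\,d-2}$ when $d-2>s-1$, and it makes no direct claim about actual versus virtual dimensions. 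Moreover, the representatives you obtain on the log side are \emph{pulled back} via $F$ from generic representatives on $\s{M}_{0,1}(Y,\beta)$, not chosen generically on $\s{M}^{\log}_{0,s+1}(\wt Y^{\dagger},\wt\beta)$ itself, so even a naive dimension count requires justification you have not supplied.

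The paper's argument in this case is both simpler and avoids the blowup entirely. One lifts $C$ directly to a log stable map in $\s{M}^{\log}_{0,\Delta'_{\qq}}(Y^{\dagger},\beta)$ for some $d$-tuple $\qq$ (possible by properness of the log moduli, even when $C$ is not torically transverse). Since this lift has exactly $d$ boundary markings, the power of $\wt\psi$ matches the one in Conjecture \ref{ttconj} on the nose. One checks that $\wt\psi_{d+1}$ and $\forget^*\psi_x$ agree on $\ev_{d+1}^{-1}(y)$ for $y\in Y\setminus D$, so the lift satisfies the \emph{same} generic point and $\psi$-conditions. Conjecture \ref{ttconj} then says the lift is torically transverse, hence so is $C$. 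The key difference: by lifting with $s=d$ markings rather than blowing up to $s<d$, the conjecture applies verbatim rather than needing to be stretched into a dimension statement it does not make.
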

\begin{proof}
First suppose that Conjecture \ref{FrobConjNaive} holds.  Suppose $\varphi:(C,x)\rar Y$ satisfies the generically specified conditions but is not torically transverse.  By Lemma \ref{irr}, $C$ is irreducible, and so general points of $C$ map to $Y\setminus D$.  So there is some toric blowup $(\wt{Y},\wt{D})$ of $(Y,D)$ such that the proper transform of $\varphi(C)$ \textit{is} torically transverse.  However, the number $d'$ of points on $C$ which map to $\wt{D}$ is now strictly less than $d:=d_{\beta}$.  Now  Conjecture \ref{FrobConjNaive} implies that the point condition and the intersection multiplicities with the boundary determine (the proper transform of) $\varphi:(C,x)\rar Y$ up to at most $\dim(\s{M}_{0,d'+1})=d'-2<d-2$ many degrees of freedom, so there can be no such curves which also satisfy the generically specified $\psi_x^{d-2}$-condition, as desired.

Now suppose instead that Conjecture \ref{ttconj} holds for $(Y,D)$.  Given $C\in \s{M}_{0,1}(Y,\beta)$, we can lift $C$ to a log stable map in $\s{M}^{\log}_{0,\Delta'_{\qq}}(Y^{\dagger},\beta)$ for some $d$-tuple $\qq$.  Let $\forget: \s{M}^{\log}_{0,\Delta'_{\qq}}(Y^{\dagger},\beta)\rar \s{M}_{0,1}(Y,\beta)$  be the map forgetting the log structure and taking $x_{d+1}$ to $x$.  We have that $\wt{\psi}_{d+1}-\forget^* \psi_x$ is supported on the locus where the curve-component containing $x$ is destabilized by forgetting the marked points $x_1,\ldots,x_{d}$.  This is clearly disjoint from the locus $\ev_{d+1}^{-1}(y)$ for any $y\in Y\setminus D$.  Hence, if $C\in \s{M}_{0,1}(Y,D)$ satisfies generic representatives of the point and $\psi$-class conditions, then so do any log curves in $\forget^{-1}(C)$.  Such log curves are torically transverse by the assumption that Conjecture \ref{ttconj} holds, so $C$ must have been torically transverse as well, proving the claim.
\end{proof}

We are now ready to prove our main theorems.

\begin{myproof}[Proof of Theorems \ref{ThmNaive} and \ref{FanoThm}]
First note that the degree $d=0$ and $d=1$ cases of both theorems follow easily using the Fundamental Class Axiom as in Remark \ref{logFCA} and our previous observation that Lemma \ref{LTlem} implies there are no degree $1$ curves hitting a generic point $y\in Y$.  So from now on, we restrict to $d\geq 2$, and we use the Fundamental Class Axiom as in Remark \ref{logFCA} to forget the extra marked point $x_{d+2}$ in the definition \eqref{Nbeta} of $N_{\beta}(q_1,\ldots,q_d)$ for Conjecture \ref{FrobConj}.

By an $s$-partition $P$ of $d$, we mean a map $P:\{1,\ldots,s\}\rar \bb{Z}_{\geq 0}$ such that $P(1)+\ldots+P(s)=d$.  We write ${d\choose P}$ for the corresponding multinomial coefficient $\frac{d!}{P(1)!\cdots P(s)!}$.  So for $W=\vartheta_{[D_1]}+\ldots+\vartheta_{[D_s]}$ as in Theorem \ref{FanoThm}, we have
\begin{align*}
    W^d=\sum_{P|d} {d\choose P} \vartheta_{[D_1]}^{P(1)}\cdots \vartheta_{[D_s]}^{P(s)},
\end{align*}
where the sum is over all $s$-partitions $P$ of $d$.  Suppose Conjecture \ref{FrobConj} holds, so the $\vartheta_0$-coefficient $c_{W,d}$ of $W^d$ is given by
\begin{align}\label{sumPNbeta}
c_{W,d}=\sum_{P|d} {d\choose P} \sum_{\beta\in \NE(Y)}  z^{\beta}N_{\beta}(\qq_P),
\end{align}
where $\qq_P$ is a $d$-tuple consisting of $P(i)$ instances of $[D_i]$ for each $i=1,\ldots,s$.  Note that $N_{\beta}(\qq_P)=0$ unless $d_{\beta}=d$.  We want to show that $c_{W,d}$ equals 
$\sum_{\beta\in \NE(Y)} z^{\beta}  d!\int_{[\s{M}_{0,1}(Y,\beta)]^{\vir}} \psi_x^{d-2} \ev_x^*([\pt])$.

Forgetting the log structure of $(Y,D)$ induces a proper map $\forget:\s{M}^{\log}_{0,\Delta'_{\qq_P}}(Y^{\dagger},\beta)\rar \s{M}_{0,1}(Y,\beta)$ (taking the marking $x_{d+1}$ to the marking $x$).  Lemma \ref{tt} ensures that any curves contributing to $p_{\beta}$ are torically transverse, and by the assumption of Conjecture \ref{ttconj}, curves contributing to \eqref{sumPNbeta} are torically transverse as well.  The restriction of $\forget$ to the locus of torically transverse curves in $\s{M}^{\log}_{0,\Delta'_{\qq_P}}(Y^{\dagger},\beta)$ is generically finite over its image with degree $P(1)!\cdots P(s)!$, thus allowing us to apply the projection formula.  On the other hand, every torically transverse curve in $\s{M}_{0,1}(Y,\beta)$ is in $\forget(\s{M}^{\log}_{0,\Delta'_{\qq_P}}(Y^{\dagger},d))$ for precisely one $s$-partition $P$ of $d$, specifically, the partition with \begin{align}\label{Pbeta}
    P(i)=\beta.[D_{p_i}]
 \end{align} for each $i$.  When \eqref{Pbeta} is satisfied for each $i$, we write $\beta \in [P]$.
 
Since $\ev_x$ factors through $\forget$, it is clear that the point condition pulls back to the point condition.  Also, since $\forget$ does not destabilize torically transverse (or irreducible) curves, $\wt{\psi}_{{d+1}}$ and $\forget^* \psi_x$ agree on the relevant loci.  Furthermore, the local deformation/obstruction theory for torically transverse curves is the same whether we are in the log setting or not.  We can thus apply the projection formula to compute
\begin{align}\label{computation}
    \sum_{P|d} {d\choose P} \sum_{\beta \in \NE(Y)} z^{\beta} N_{\beta}(\qq_P) &= \sum_{P|d}\sum_{\beta \in \NE(Y)}z^{\beta}\frac{d!}{P(1)!\cdots P(s)!}\int_{[\s{M}^{\log}_{0,\Delta'_{\qq_P}}(Y^{\dagger},\beta)]^{\vir}} \wt{\psi}_{{d+1}}^{d-2}\ev_{d+1}^*[\pt] \nonumber \\
    &= \sum_{P|d}\sum_{\beta \in [P]}z^{\beta}\frac{d!}{P(1)!\cdots P(s)!}\int_{P(1)!\cdots P(s)![\s{M}_{0,1}(Y,\beta)]^{\vir}} \psi_x^{d-2}\ev^*[\pt]\\
    &= \sum_{\beta\in \NE(Y)}z^{\beta} d!\int_{[\s{M}_{0,1}(Y,\beta)]^{\vir}}\psi_x^{d-2}\ev^*[\pt],\nonumber 
\end{align}
as desired.  This proves Theorem \ref{FanoThm} in the cases where Conjectures \ref{FrobConj} and \ref{ttconj} hold.

In general, by Lemmas \ref{irr} and \ref{LTlem}, the curves in $\s{M}_{0,1}(Y,\beta)$ satisfying generic representatives of $\ev^*[\pt]$ and $\psi_x^{d-2}$ (for $d:=d_{\beta}$) are unobstructed, and so we may replace $[\s{M}_{0,1}(Y,\beta)]^{\vir}$  with the usual fundamental class $[\s{M}_{0,1}(Y,\beta)]$.  Now suppose that either $D$ is irreducible (so toric transversality holds automatically) or that either Conjecture \ref{FrobConjNaive} or Conjecture \ref{ttconj} holds (yielding toric transversality via Lemma \ref{tt}).  Then by the arguments of the previous paragraph, for $P$ the $s$-partition such that $[P]\ni \beta$, we obtain that $p_{\beta}$ equals ${d\choose P}$ times the number of torically transverse irreducible curves in $\s{M}^{\log}_{0,\Delta'_{\qq_P}}(Y^{\dagger},\beta)$ which are contained in the intersection of $\ev_{d+1}^{-1}(y)$ and the support of a general representative of $\wt{\psi}^{d-2}_{d+1}$.

To interpret the condition $\wt{\psi}^{d-2}_{d+1}$, let $\?{\forget}:\s{M}^{\log}_{0,\Delta'_{\qq_P}}(Y^{\dagger},\beta)\rar \?{\s{M}}_{0,s+1}$  denote the forgetful map which remembers only the stabilization of the domain of the log stable maps.  As before, $\s{L}_{d+1}^{\log}$ and $\forget^*\?{\s{L}}_{d+1}$ agree except possibly on the locus where the component of $C$ containing $x$ is contracted by the stabilization map, and since $d\geq 2$ this does not intersect the locus of irreducible torically transverse curves.  So we can replace $\psi_{d+1}^{d-2}$ with $\?{\forget}^* \?{\psi}_{d+1}^{d-2}$.  It is standard that $\?{\psi}_{d+1}^{d-2}$ in $\?{\s{M}}_{0,d+1}$ is the class of a point, so we may view $\?{\forget}^* \?{\psi}_{d+1}^{d-2}$ as generically specifying the image of the curve under $\?{\forget}$, i.e., specifying the domain marked curve.  Thus,
\begin{align}\label{pbNb}
p_{\beta}={d\choose P} N_{\beta}^{\naive}(\qq_P).
\end{align} 

Now, if Conjecture  \ref{FrobConjNaive} holds, it follows in the same way as \eqref{sumPNbeta} that
\begin{align}\label{sumPNbetaNaive}
c_{W,d}=\sum_{P|d} {d\choose P} \sum_{\beta\in \NE(Y)}  z^{\beta}N^{\naive}_{\beta}(\qq_P).
\end{align}
The Conjecture \ref{FrobConjNaive} case of Theorem \ref{FanoThm} now follows immediately from \eqref{sumPNbetaNaive} and \eqref{pbNb}.

Finally, note that in Theorem \ref{ThmNaive} we generically specify $\varphi^{-1}(D)$, and the factor ${d\choose P}$ in \eqref{pbNb} accounts for the number of ways to choose which points in $\varphi^{-1}(D)$ map to each component of $D$.  Theorem \ref{ThmNaive} thus follows from \eqref{pbNb}.
\end{myproof}

\bibliographystyle{amsalpha}  
\bibliography{main}        

\newcommand{\etalchar}[1]{$^{#1}$}
\providecommand{\bysame}{\leavevmode\hbox to3em{\hrulefill}\thinspace}
\providecommand{\MR}{\relax\ifhmode\unskip\space\fi MR }
\providecommand{\MRhref}[2]{%
  \href{http://www.ams.org/mathscinet-getitem?mr=#1}{#2}
}
\providecommand{\href}[2]{#2}
\begin{thebibliography}{CCG{\etalchar{+}}13}

\bibitem[AC14]{AC}
D.~{Abramovich} and Q.~{Chen}, \emph{Stable logarithmic maps to
  {D}eligne-{F}altings pairs {II}}, Asian J. Math. \textbf{18} (2014), no.~3,
  465--488.

\bibitem[ACC{\etalchar{+}}16]{ACC}
M.~{Akhtar}, T.~{Coates}, A.~{Corti}, L.~{Heuberger}, A.~{Kasprzyk},
  A.~{Oneto}, A.~{Petracci}, T.~{Prince}, and K.~{Tveiten}, \emph{Mirror
  symmetry and the classification of orbifold del {P}ezzo surfaces}, Proc.
  Amer. Math. Soc. \textbf{144} (2016), no.~2, 513--527.

\bibitem[AGV08]{AGV}
D.~{Abramovich}, T.~{Graber}, and A.~{Vistoli}, \emph{Gromov-{W}itten theory of
  {D}eligne-{M}umford stacks}, Amer. J. Math. \textbf{130} (2008), no.~5,
  1337--1398.

\bibitem[CCG{\etalchar{+}}13]{CCGGK}
T.~{Coates}, A.~{Corti}, S.~{Galkin}, V.~{Golyshev}, and A.~{Kasprzyk},
  \emph{Mirror symmetry and {F}ano manifolds}, European {C}ongress of
  {M}athematics, Eur. Math. Soc., Z\"urich, 2013, pp.~285--300.

\bibitem[CO06]{CO}
C.-H. {Cho} and Y.-G. {Oh}, \emph{Floer cohomology and disc instantons of
  {L}agrangian torus fibers in {F}ano toric manifolds}, Asian J. Math.
  \textbf{10} (2006), no.~4, 773--814.

\bibitem[CPS]{CPS}
M.~{Carl}, M.~{Pumperla}, and B.~{Siebert}, \emph{{A tropical view on
  Landau-Ginzburg models}}, preprint.

\bibitem[EG98]{EG}
D.~{Edidin} and W.~{Graham}, \emph{Equivariant intersection theory}, Invent.
  Math. \textbf{131} (1998), no.~3, 595--634.

\bibitem[FG09]{FG1}
V.~{Fock} and A.~{Goncharov}, \emph{{Cluster ensembles, quantization and the
  dilogarithm}}, Ann. Sci.\'Ec. Norm. Sup. (4) \textbf{42} (2009), no.~6,
  865--930.

\bibitem[GHK15a]{GHK3}
M.~{Gross}, P.~{Hacking}, and S.~{Keel}, \emph{Birational geometry of cluster
  algebras}, Algebr. Geom. \textbf{2} (2015), no.~2, 137--175.

\bibitem[GHK15b]{GHK1}
\bysame, \emph{Mirror symmetry for log {C}alabi-{Y}au surfaces {I}}, Publ.
  Math. Inst. Hautes \'Etudes Sci. \textbf{122} (2015), 65--168.

\bibitem[{Gol}07]{Goly}
V.V. {Golyshev}, \emph{Classification problems and mirror duality}, Surveys in
  geometry and number theory: reports on contemporary {R}ussian mathematics,
  London Math. Soc. Lecture Note Ser., vol. 338, Cambridge Univ. Press,
  Cambridge, 2007, pp.~88--121.

\bibitem[GPS10]{GPS}
M.~{Gross}, R.~{Pandharipande}, and B.~{Siebert}, \emph{The tropical vertex},
  Duke Math. J. \textbf{153} (2010), no.~2, 297--362.

\bibitem[{Gro}10]{GrP2}
M.~{Gross}, \emph{Mirror symmetry for {$\Bbb P\sp 2$} and tropical geometry},
  Adv. Math. \textbf{224} (2010), no.~1, 169--245.

\bibitem[{Gro}11]{Gr}
\bysame, \emph{Tropical geometry and mirror symmetry}, CBMS Regional Conference
  Series in Mathematics, vol. 114, Published for the Conference Board of the
  Mathematical Sciences, Washington, DC; by the American Mathematical Society,
  Providence, RI, 2011.

\bibitem[GS]{GSInt2}
M.~{Gross} and B.~{Siebert}, \emph{Intrinsic mirror symmetry}, (in
  preparation).

\bibitem[GS13]{GSlog}
M.~{Gross} and B.~{Siebert}, \emph{Logarithmic {G}romov-{W}itten invariants},
  J. Amer. Math. Soc. \textbf{26} (2013), no.~2, 451--510.

\bibitem[GS18]{GSInt}
M.~{Gross} and B.~{Siebert}, \emph{Intrinsic mirror symmetry and punctured
  {G}romov-{W}itten invariants}, Algebraic geometry: {S}alt {L}ake {C}ity 2015,
  Proc. Sympos. Pure Math., vol.~97, Amer. Math. Soc., Providence, RI, 2018,
  pp.~199--230.

\bibitem[HLZ]{HLZ}
H.~{Hong}, Y.-S. {Lin}, and J.~{Zhao}, \emph{{Bulk-deformed potentials for
  toric Fano surfaces, wall-crossing and period}}, arXiv:1812.08845.

\bibitem[HV]{HV}
K.~{Hori} and C.~{Vafa}, \emph{{Mirror Symmetry}}, arXiv:hep-th/0002222.

\bibitem[{Kre}99]{Kr}
A.~{Kresch}, \emph{Cycle groups for {A}rtin stacks}, Invent. Math. \textbf{138}
  (1999), no.~3, 495--536.

\bibitem[KY]{KY}
S.~{Keel} and T.Y. {Yu}, \emph{{The Frobenius structure conjecture for affine
  log Calabi-Yau's containing a torus}}, (in preparation).

\bibitem[LT]{LT}
B.~{Lehmann} and S.~{Tanimoto}, \emph{{Geometric Manin's Conjecture and
  rational curves}}, arXiv:1702.08508.

\bibitem[{Man}]{ManFrob}
T.~{Mandel}, \emph{{Theta bases and log Gromov-Witten invariants of cluster
  varieties}}, arXiv:1903.03042.

\bibitem[{Pri}]{PrP2}
T.~{Prince}, \emph{{The Tropical Superpotential For $\bb{P}^2$}},
  arXiv:1703.07620.

\bibitem[{Ton}]{Tonk}
D.~{Tonkonog}, \emph{{String topology with gravitational descendants, and
  periods of Landau-Ginzburg potentials}}, arXiv:1801.06921.

\end{thebibliography}
\index{Bibliography@\emph{Bibliography}}%

\end{document}